\documentclass{article}
\usepackage{graphicx} 
\usepackage{amsmath}
\usepackage{amssymb}
\usepackage{amsthm}
\usepackage[numbers,sort&compress]{natbib}
\usepackage{url}

\newcommand{\la}{\left|}
\newcommand{\ra}{\right|}
\newcommand{\lb}{\left\{}
\newcommand{\rb}{\right\}}
\newcommand{\lc}{\left(}
\newcommand{\rc}{\right)}
\newcommand{\ld}{\left[}
\newcommand{\rd}{\right]}
\newcommand{\li}{\langle}
\newcommand{\ri}{\rangle}
\newcommand{\norm}[1]{\left\lVert#1\right\lVert}
\newcommand{\h}{\frac{1}{2}}
\newcommand{\infI}{\int_{-\infty}^{\infty}}
\newcommand{\hI}{\int_{-\h}^{\h}}

\newtheorem{thm}{Theorem}
\newtheorem{lem}{Lemma}
\newtheorem{prop}{Proposition}

\theoremstyle{definition}
\newtheorem{remark}{Remark}

\title{Unconditional Bounds for Two, Three and Four Consecutive Moderate Gaps between Zeros of the Riemann Zeta Function}
\author{}
\date{October 2026}

\begin{document}

\author{
  C\'esar Ang\'on\thanks{Email:\texttt{ceangon@ucsb.edu}}
  \and
  Erick Ross\thanks{Deceased.}
  \and
  Ze Sen Tang\thanks{Email:\texttt{zxt230006@utdallas.edu}}
  \and
  Boyang Zhao\thanks{Corresponding author.
  Email:\texttt{bzhao23@buffalo.edu}.}
}

\maketitle
\begin{center}
\emph{In memory of Erick Ross, whose insight, generosity, and love of mathematics inspired this work.}
\end{center}
\begin{abstract}
We prove unconditional positive-proportion results for consecutive moderate
gaps between zeros of the Riemann zeta function. More precisely, for every
fixed \(0<c\leq 1/4\), the lower asymptotic densities, normalized by
\(N(T)\), of occurrences of two, three, and four consecutive gaps,
each of length at least \(2\pi c/\log T\), are at least
\(0.3450014\ldots\), \(0.1812517\ldots\), and \(0.0175021\ldots\),
respectively. Here zeros are ordered by ordinate and counted with
multiplicity, and \(N(T)\) denotes the number of nontrivial zeros with
ordinates in \((0,T]\). These bounds address the corresponding cases of a question of Gonek and Sahay. The proof combines the methods of Alp\"oge--Furman and Lamzouri with local spectral estimates and cluster counting.
\end{abstract}

\section{Introduction}

Let \(0<\gamma_1\leq \gamma_2\leq \cdots\) denote the positive
ordinates of the nontrivial zeros of the Riemann zeta function,
counted with multiplicity, and let \(N(T)\) denote the number of
such zeros with ordinates in \((0,T]\). By the Riemann--von Mangoldt
formula, the mean spacing between consecutive ordinates near height
\(T\) is asymptotic to
\[
\frac{2\pi}{\log T}.
\]

Following Gonek and Sahay~\cite{GS24}, for fixed \(c>0\) we call
a gap \(\gamma_{n+1}-\gamma_n\) \(c\)-moderate if
\[
\gamma_{n+1}-\gamma_n\geq \frac{2\pi c}{\log T}.
\]
For an integer \(\ell\geq 2\), let \(w_\ell(T,c)\) denote the number
of indices \(1\leq n\leq N(T)-\ell\) for which the \(\ell\)
consecutive gaps
\[
\gamma_{n+1}-\gamma_n,\ \ldots,\
\gamma_{n+\ell}-\gamma_{n+\ell-1}
\]
are all \(c\)-moderate. The corresponding sets
\(\mathcal W_\ell(T,c)\) are defined precisely in Section~2.

The problem concerns the joint local behavior of neighboring gaps.
A large difference \(\gamma_{n+\ell}-\gamma_n\) does not imply that
each of the \(\ell\) intervening gaps is moderate, since one or more
of them may still be very small. Thus, a lower bound for the total
span alone does not guarantee a run of consecutive moderate gaps.

Gonek and Sahay~\cite[Theorem~2.1 and Corollary~2.2]{GS24} proved, assuming the Riemann hypothesis and the pair coorelation conjecture, that such runs have positive lower density for every fixed\(\ell\), provided\(c > 0\) is sufficiently small.
They also noted that, at the time of their work, even the existence of infinitely many pairs of consecutive moderate gaps was not known unconditionally for and fixed \(c>0\).

Our main result gives uniform positive-proportion lower bounds for
two, three and four consecutive moderate gaps, without assuming
either the Riemann hypothesis or the pair correlation conjecture.

\begin{thm}\label{THM}
For every fixed \(0<c\leq \frac14\) and every integer
\(2\leq \ell\leq 4\),
\[
\liminf_{T\to\infty}\frac{w_\ell(T,c)}{N(T)}
\geq
1-\frac{\ell+2}{2}\bigl(C_{\mathrm{MT}}-1\bigr)
=
\frac{\ell+4-(\ell+2)C_{\mathrm{MT}}}{2},
\]
where
\[
C_{\mathrm{MT}}
=
\frac12+\frac{1}{\sqrt2}\cot\!\left(\frac{1}{\sqrt2}\right)
=
1.3274992963205\ldots.
\]
\end{thm}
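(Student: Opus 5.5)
We will reduce the count of runs of moderate gaps to a bound on the number of \emph{short} gaps. The input is a Montgomery--Taylor type estimate in the form proved unconditionally by the Alp\"oge--Furman and Lamzouri methods. The constant \(C_{\mathrm{MT}}=\frac12+\frac1{\sqrt2}\cot(1/\sqrt2)\) is the Montgomery--Taylor bound for the normalized count of zeros counted with multiplicity,
\[
\sum_{0<\gamma\le T} m_\gamma \le (C_{\mathrm{MT}}+o(1))N(T).
\]
We will use its localized refinement. For \(0<c\le\frac14\), the weighted count of pairs \((\gamma,\gamma')\), \(\gamma\ne\gamma'\) as indices, with \(|\gamma-\gamma'|<2\pi c/\log T\) is controlled by the same optimization, since the extremal Fourier kernel \(\widehat{K}\) for Montgomery's \(F(\alpha)\) on \([-1,1]\) stays admissible for small windows. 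Concretely, we aim to show
\[
\#\{n\le N(T): \gamma_{n+1}-\gamma_n<2\pi c/\log T\}\le \Bigl(\tfrac{C_{\mathrm{MT}}-1}{2}+o(1)\Bigr)N(T).
\]
The factor \(\frac12\) should come from symmetry: each short gap contributes an ordered pair twice in the pair-correlation sum. The \(-1\) removes the diagonal. The local spectral estimate is what lets a Selberg/Beurling--type majorant of \(\mathbf 1_{[-c,c]}\) be compared with the double-zero majorant when \(c\le\frac14\).

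Given this bound on short gaps \(S(T)\), the combinatorial step is a union bound with cluster counting. An index \(n\) fails to start an \(\ell\)-run iff one of the gaps \(n,\dots,n+\ell-1\) is short. A naive union bound gives \(N(T)-\ell S(T)\), which would produce \(1-\frac{\ell}{2}(C_{\mathrm{MT}}-1)\). That is stronger than what is claimed, so the short-gap bound above is probably too optimistic. I expect the true estimate to be \(S(T)\le (C_{\mathrm{MT}}-1)N(T)\) (unordered, one count per short gap), together with a cluster argument saving about a factor.

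The plan is to group zeros into clusters: maximal chains of consecutive short gaps. A cluster of \(k\) zeros has \(k-1\) short gaps and contributes at least \(k(k-1)\) to the off-diagonal pair sum, since all pairs within distance \(\ell\cdot 2\pi c/\log T\) are captured once the kernel is positive on a slightly larger window. A cluster with \(k-1\) short gaps destroys at most \(k-1+\ell-1\) starting indices. We then optimize the ratio
\[
\frac{(k-1)+(\ell-1)}{\text{pair-correlation weight of the cluster}}.
\]
If the weight per cluster is at least \(2k-?\) suitably, this yields the loss \(\frac{\ell+2}{2}(C_{\mathrm{MT}}-1)\). The worst case should be the isolated short gap (\(k=2\)), destroying \(\ell\) indices against weight \(2\). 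An extra \(+2\) is needed to absorb the boundary overlap between nearby clusters.

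The main obstacle is the first step. It requires proving an unconditional (no RH) local pair estimate in which clusters are weighted at least \(k(k-1)\), with the same constant \(C_{\mathrm{MT}}\), via the Alp\"oge--Furman positivity argument with a Lamzouri-style zero-density input off the line. The restriction \(\ell\le 4\) presumably arises because the window \(\ell c\le 1\) must stay within the range where the kernel is nonnegative, and I would check this first.
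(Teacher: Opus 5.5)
There is a genuine gap, and it sits in the step you flag as the main obstacle: as you formulate it, that step cannot be carried out without RH.

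\textbf{The termwise pair bound.} Your plan bounds the pair sum $\|G\|_F^2=\sum_{\rho,\rho'}K_\delta^2\bigl(i(\rho-\rho')\tfrac{\log T}{2\pi}\bigr)$ from below termwise, by the contribution of close pairs or of clusters weighted $k(k-1)$.

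Under RH this is legitimate, since every term is the square of a real number. It gives $2K_\delta(c)^2S(T)\le\|G\|_F^2-N(T)$, hence $w_\ell(T,c)\ge N(T)-\frac{\ell}{2K_\delta(c)^2}(\|G\|_F^2-N(T))-\ell$, which is stronger than the theorem. That is why your ``naive'' count looked too good.

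Without RH, the terms involving off-line zeros have complex arguments. They are not nonnegative and can be large, so you cannot discard them to isolate the close pairs.

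The paper replaces the termwise bound by a spectral argument:
\begin{itemize}
\item Write $G=P+Q$, with $P$ built from the simple critical zeros.
\item Sylvester's law gives $n_+(Q)=r+k\le\frac12\operatorname{tr}(Q)$.
\item Split $Q=Q_+-Q_-$. Von Neumann's trace inequality together with $(p-q)^2\ge\Psi(p)-4q$ gives $\|P+Q\|_F^2\ge\operatorname{tr}\Psi(P)+4\operatorname{tr}(Q)-4n_+(Q)$, hence $\operatorname{tr}\Psi(P)+2\operatorname{tr}(Q)\le\|G\|_F^2$.
\end{itemize}
Because $\Psi$ grows only linearly beyond $2$, a block of close simple zeros can be credited only linearly. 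The pair and triple lemmas with trace superadditivity give $\operatorname{tr}\Psi\ge(1+K_\delta(c)^2)|B_i|$, nothing like $k(k-1)$. Even under RH the weight $k(k-1)$ is unavailable. Each close ordered pair is worth only $K_\delta(c)^2<1$. In a long chain the span can reach $(k-1)c>1$, and distant pairs need not contribute at all.

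\textbf{Where $\frac{\ell+2}{2}$ and $\ell\le4$ come from.} The spectral bound sees multiple and off-line zeros only through $\operatorname{tr}(Q)=N(T)-n$. It carries no information about how close they are to neighbouring simple zeros. This, not boundary overlap between short-gap clusters, is the source of $\frac{\ell+2}{2}$. Your counting also ignores that such zeros produce equal ordinates in $(\gamma_j)$.

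In the paper's count:
\begin{itemize}
\item Every cluster of equal ordinates containing a zero counted by $\operatorname{tr}(Q)$ contains at least two of them, since off-line zeros come in pairs $\beta+i\gamma$, $1-\beta+i\gamma$. So there are at most $\operatorname{tr}(Q)/2$ such clusters.
\item Each such cluster spoils at most $\ell$ strings of $\ell+1$ consecutive simple critical zeros. Its own zeros are also lost as starting points. The loss is $\operatorname{tr}(Q)+\frac{\ell}{2}\operatorname{tr}(Q)=\frac{\ell+2}{2}\operatorname{tr}(Q)$.
\item Short gaps among simple zeros cost only $\frac{\ell}{2}|\widetilde{R_1}|$, since a block kills at most $|B_i|+\ell-2\le\frac{\ell}{2}|B_i|$ strings.
\item Because $K_\delta(c)^2>\frac34\ge\frac{\ell}{\ell+2}$ for $c\le\frac14$, the total loss is at most $\frac{\ell+2}{2}\bigl(\operatorname{tr}(Q)+K_\delta(c)^2|\widetilde{R_1}|\bigr)\le\frac{\ell+2}{2}(\|G\|_F^2-N(T))$.
\end{itemize}
So the extremal configuration is a non-simple or off-line cluster sitting inside a gap between simple zeros, not an isolated short gap.

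Likewise, $\ell\le4$ is not a window condition $\ell c\le1$. The inequality $K_\delta(c)^2>\ell/(\ell+2)$ holds up to $\ell=6$. The restriction $\ell\le4$ is just the range where $\ell+4-(\ell+2)C_{\mathrm{MT}}>0$; for $\ell=4$ this is the condition $C_{\mathrm{MT}}<\frac43$.
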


In particular, the lower bounds for two, three and four consecutive
moderate gaps are
\[
0.3450014\ldots,\qquad
0.1812517\ldots,\qquad
0.0175021\ldots,
\]
respectively. The positivity of the four-gap bound follows from
\(C_{\mathrm{MT}}<\frac43\). The constant \(C_{\mathrm{MT}}\) comes
from the classical Montgomery--Taylor extremal window; see
Montgomery~\cite{Mon75} and \cite[Corollary~14]{CCLM17}.

\begin{remark}
After the first version of this paper appeared on arXiv, Youness Lamzouri posted his preprint~\cite{Lam26Isolated} and informed us that he had been working independently on the same problem. He obtained unconditional positive-proportion results for two, three, four, and five consecutive moderate gaps. For two, three and four gaps, his stated results give higher proportions for smaller values of \(c\), whereas our bounds remain valid at the larger gap threshold \(c = \frac14\).
\end{remark}

Our argument is based on the unconditional finite-dimensional method
of Alp{\"o}ge and Furman~\cite{AF26} and the Hilbert-space formulation
of Lamzouri~\cite{Lam26}. The analytic input is Lamzouri's unweighted
second-moment estimate~\cite[Lemma~3.2]{Lam26}, based on the
unconditional pair-correlation formula of Baluyot, Goldston,
Suriajaya and Turnage-Butterbaugh~\cite[Lemma~5]{BGSTB24}.

We group nearby simple critical line zeros into blocks and partition each block into pairs and triples. Local spectral estimates give a uniform additional contribution per point in each block. This yields a bound for the combined contribution of the two obstructions in terms of the second moment.
A separate counting argument converts that bound into a lower bound for runs in the full sequence of ordinates, accounting for multiple zeros and distinct zeros sharing an ordinate.
Together, these estimates give Theorem~\ref{THM}

Sections~2--5 introduce the second-moment estimate and the matrix framework. Section~6 proves the local spectral estimates, Section~7 gives the counting argument for consecutive gaps, and Section~8 completes the proof of Theorem~\ref{THM}.

\section{Preliminaries}

\subsection{Notations for Zeros}
Set $\rho = \beta + i \gamma$ to be the nontrivial zeros of the Riemann zeta function, and
\begin{equation}
    N(T) = \sum_{\substack{\rho \\ 0 < \gamma \leq T}} 1
\end{equation}
be the number of nontrivial zeros from $0$ to $T>0$. Now, we define a multi-set
\begin{equation}
    Z_T = \lb i\lc\rho - \h \rc \frac{\log T}{2\pi} : 0 < \gamma\leq T \rb,
\end{equation}
where $\la Z_T \ra = N(T)$ and $\overline{Z_T}=Z_T$. Note that the critical zeros will be a real number in $Z_T$. Now, we split $Z_T$ into three disjoint multi-sets
\begin{equation}
    R_1=\lb x_1,...,x_n \rb, \quad R_2 = \lb x_{n+1},...,x_{n+1},x_{n+2},...,x_{n+2},...,x_{n+r},...,x_{n+r} \rb ,
\end{equation}
\begin{equation*}
    R_3 = \lb z_1,\overline{z}_1, ... ,z_1,\overline{z}_1,z_2,\overline{z}_2, ... ,z_2,\overline{z}_2,..., z_k,\overline{z}_k, ...,  z_k,\overline{z}_k \rb,
\end{equation*}
where $R_1$ is a set of simple critical zeros, $R_2$ is of multiple critical zeros, $R_3$ is of pairs off-line zeros, and $m_z$ is the multiplicity of $z \in Z_T$. Moreover, we have
\begin{equation}
    \#\lb \text{distinct zeros in }Z_T \rb = n+r+2k.
\end{equation}
Let $m_z$ be the multiplicity of $z \in Z_T$, then we have
\begin{equation}
    N(T) = \sum_{l=1}^{n+r} m_{x_l} +\sum_{l=1}^k 2m_{z_l}.
\end{equation}
Finally, let \[0<\gamma_1 \le \gamma_2\leq \cdots \le \gamma_{N(T)} \le T\] be the ordinates of the nontrivial zeros up to height \(T\), counted with multiplicity.
For \(c>0\) and an integer \(\ell \ge 2\), define \[\mathcal W_\ell(T,c) = \left\{
j\in\mathbb Z:1 \le j \le N(T) - \ell, \quad
\gamma_{j+a+1} - \gamma_{j+a}\ge \frac{2\pi c}{\log T}\  \forall \quad 0\le a < \ell \right\}\]
and put
\[w_\ell(T,c) = \# \mathcal W_\ell(T,c).\]
Thus, \(w_\ell(T,c)\) counts the runs of \(\ell\) consecutive \(c\)-moderate gaps between zeros with ordinates in \((0,T]\)

\subsection{Notations for Test Function}
For a compactly supported function \(f\in L^1(\mathbb R)\), we set
\begin{equation}
    \hat{f}(\xi) = \infI f(u) e^{-2\pi i \xi u} du \quad \text{for } \xi \in \mathbb{C}
\end{equation}
as the Fourier transform of $f$.
We use the following second-moment estimate from
Lamzouri~\cite[Lemma~3.2]{Lam26}.
\begin{lem} \label{Youness}
    Let 
    \begin{equation}
        C_{MT} = \h + \frac{1}{\sqrt2} \cot{\frac{1}{\sqrt{2}}} = 1.3274992963205....
    \end{equation}
        There exists a family of real-valued even functions \(\eta_\delta \in C_c^\infty((-\frac12, \frac12))\), \(0< \delta <\frac14\), such that, on setting \(K_\delta(z):=\widehat{\eta_\delta^2}(z)\), we have \(K_\delta(0) = 1\) and, for every fixed \(\delta\),

    \begin{equation} \label{Youness eq}
        \sum_{\substack{\rho,\rho' \\ 0<\gamma,\gamma'\leq T}} K_{\delta}^2 \lc i\lc \rho-\rho' \rc \frac{\log T}{2\pi} \rc = \lc C_{\eta_{\delta}}+O_{\delta}\lc \frac{1}{\sqrt{\log T}}\rc \rc N(T),
    \end{equation}
where \(C_{\eta_\delta}\) is defined below and satisfies
\[
C_{\eta_\delta} \longrightarrow C_{\mathrm{MT}}
\qquad \text{as } \delta \to 0.
\]
\end{lem}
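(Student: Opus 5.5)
This lemma is quoted from Lamzouri~\cite[Lemma~3.2]{Lam26}. I would reprove it by combining an unconditional Montgomery-type pair-correlation formula with the Montgomery--Taylor extremal problem.

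\emph{The test function.} Put $g_\delta=\eta_\delta^2$ and $r_\delta=K_\delta^2$. Then $r_\delta=\widehat{g_\delta}^{\,2}$. Since $g_\delta\ge 0$ is even, real-valued and supported in $(-\frac12,\frac12)$, the function
\[
\widehat{r_\delta}(\alpha)=(g_\delta*g_\delta)(\alpha)
\]
is nonnegative, even, smooth and supported in $(-1,1)$. The normalization $K_\delta(0)=1$ is simply $\int g_\delta=1$, i.e.\ $\int\eta_\delta^2=1$, and this gives $r_\delta(0)=1$. Because $\eta_\delta$ is real and even, $K_\delta$ is entire and satisfies $K_\delta(\bar z)=\overline{K_\delta(z)}$. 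Consequently the double sum in \eqref{Youness eq} is real even when off-line zeros produce complex arguments $i(\rho-\rho')\frac{\log T}{2\pi}$.

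\emph{The pair-correlation step.} I would apply the unconditional formula of Baluyot--Goldston--Suriajaya--Turnage-Butterbaugh~\cite[Lemma~5]{BGSTB24}. It is stated for all nontrivial zeros, with the differences $\rho-\rho'$ inserted into an entire test function whose Fourier transform is supported in $[-1,1]$. The expected output is Montgomery's
\[
F(\alpha)\approx |\alpha|+T^{-2|\alpha|}\log T \quad (|\alpha|\le 1),
\]
whose second term acts as a point mass at $\alpha=0$. With $\widehat{r_\delta}$ supported in $(-1,1)$ this yields
\[
\sum_{\rho,\rho'} r_\delta\Big(i(\rho-\rho')\tfrac{\log T}{2\pi}\Big)
=N(T)\Big(r_\delta(0)+\int_{-1}^{1}\widehat{r_\delta}(\alpha)\,|\alpha|\,d\alpha\Big)+\text{error}.
\]
So I would define
\[
C_{\eta_\delta}:=1+\int_{-1}^{1}|\alpha|\,(\eta_\delta^2*\eta_\delta^2)(\alpha)\,d\alpha .
\]
For each fixed $\delta$, the error terms of~\cite{BGSTB24} are bounded by derivatives of $\eta_\delta$, giving the $O_\delta(N(T)/\sqrt{\log T})$ stated in \eqref{Youness eq}.

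\emph{The limit $C_{\eta_\delta}\to C_{\mathrm{MT}}$.} Minimizing
\[
1+\int|\alpha|\,(g*g)(\alpha)\,d\alpha
\]
over $g\ge0$ with $\int g=1$ and $\operatorname{supp} g\subset[-\frac12,\frac12]$ is exactly the Montgomery--Taylor problem. Its infimum is
\[
C_{\mathrm{MT}}=\tfrac12+\tfrac1{\sqrt2}\cot\tfrac1{\sqrt2}
\]
(see Montgomery~\cite{Mon75} and \cite[Corollary~14]{CCLM17}), attained by an explicit nonnegative cosine-type extremizer $g_0$. I would construct $\eta_\delta$ as follows:
\begin{itemize}
\item take $\sqrt{g_0}$;
\item dilate it into $[-\frac12+\delta,\frac12-\delta]$;
\item mollify at scale $\delta$;
\item renormalize in $L^2$.
\end{itemize}
This produces real, even $\eta_\delta\in C_c^\infty((-\frac12,\frac12))$. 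Convolution against the bounded weight $|\alpha|$ is continuous in $L^1$, so $C_{\eta_\delta}\to C_{\mathrm{MT}}$ as $\delta\to0$.

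\emph{Main obstacle.} I expect the hard step to be the unconditional pair-correlation input. The argument needs the version of~\cite{BGSTB24} that handles complex $\rho-\rho'$, multiple zeros, and the precise diagonal term without assuming the Riemann hypothesis. It must also give uniform control of the error for compactly supported smooth $\widehat{r_\delta}$ whose support reaches up to, but not including, $\pm1$. Checking that the diagonal contribution equals exactly $r_\delta(0)N(T)$ under this normalization is the point I would verify most carefully.
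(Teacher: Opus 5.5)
Your route is the one behind \cite[Lemma~3.2]{Lam26}, which the paper cites rather than reproves: feed $r_\delta=K_\delta^2$ into the unconditional Montgomery theorem of \cite{BGSTB24}, then push $\eta_\delta^2$ toward the Montgomery--Taylor extremizer. The gap is in your main term.

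The term $T^{-2|\alpha|}\log T$ in $F(\alpha)$ has total mass $1$ concentrated at $\alpha=0$. Against $\widehat{r_\delta}$ it therefore produces $\widehat{r_\delta}(0)=(\eta_\delta^2*\eta_\delta^2)(0)=\int\eta_\delta^4$, not $r_\delta(0)=K_\delta(0)^2=1$. As a check, the standard form $r(0)+\int r(u)\bigl(1-(\tfrac{\sin\pi u}{\pi u})^2\bigr)\,du$ equals $\widehat r(0)+\int|\alpha|\,\widehat r(\alpha)\,d\alpha$ when $\operatorname{supp}\widehat r\subset[-1,1]$, because $r(0)=\int\widehat r$. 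So the correct constant is the paper's
\[
C_{\eta_\delta}=(\eta_\delta^2*\eta_\delta^2)(0)+\int_{-1}^{1}|\alpha|\,(\eta_\delta^2*\eta_\delta^2)(\alpha)\,d\alpha .
\]
By Cauchy--Schwarz, $\int\eta_\delta^4>1$, since $\int\eta_\delta^2=1$ and the support has length $<1$. Hence the asymptotic formula with your $C_{\eta_\delta}$ is never correct.

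The same slip breaks your extremal step. The functional $1+\int|\alpha|(g*g)$ is not the Montgomery--Taylor functional. Over $g\ge0$, $\int g=1$, $\operatorname{supp}g\subset[-\frac12,\frac12]$, its infimum is $1$: take $g$ uniform on $[-\epsilon,\epsilon]$. Fed into Montgomery's argument or into Lemma~\ref{PsiBound}, that would show almost all zeros are simple.

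The term $\int g^2=(g*g)(0)$ is exactly what penalizes concentration. The Euler--Lagrange equation for $\int g^2+\iint|x+y|\,g(x)g(y)\,dx\,dy$ forces $g''+2g=0$, so $g_0=A$. Evaluating the constant $g_0(x)+\int|x+y|\,g_0(y)\,dy$ at $x=\frac12$ gives $\frac12+\frac1{\sqrt2}\cot\frac1{\sqrt2}=C_{\mathrm{MT}}$.

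Even along your construction with $\eta_\delta^2\to A$, your constant tends to $1+\int|\alpha|(A*A)\approx 1.3214$, not $C_{\mathrm{MT}}$, because $\int A^2\approx 1.0061$.

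Once you correct the diagonal term to $\widehat{r_\delta}(0)$, your mollified dilation of $\sqrt{g_0}$ is an acceptable substitute for the paper's cutoff $\widetilde{\eta}_\delta\sqrt{A}/\sqrt{A_\delta}$. The limit argument then also needs $\int\eta_\delta^4\to\int A^2$, i.e.\ $L^2$ convergence of $\eta_\delta^2$, not only $L^1$ continuity of the $|\alpha|$-term.
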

The estimate in \cite[Lemma~3.2]{Lam26} is originally normalized
by $T\log T/(2\pi)$. By the Riemann--von Mangoldt asymptotic
$N(T)=T\log T/(2\pi)+O(T)$~\cite[Theorem~9.4]{Titchmarsh1986},
replacing that normalization by
$N(T)$ introduces a relative error of $O(1/\log T)$, which is
absorbed by the displayed error term.

Following the smoothing construction in
\cite[proof of Lemma~3.2]{Lam26}, we construct $\eta_{\delta}$ as follows.
Set $I = \ld -\h,\h \rd$ and define
\begin{equation}
    A(x) = \begin{cases}
        \frac{\cos{\sqrt{2}x}}{\sqrt{2}\sin{\frac{1}{\sqrt{2}}}}, &\text{if }x\in I,\\
        0, &\text{else}.
    \end{cases}
\end{equation}
So, $A$ is real, even, strictly positive on $I$ and $\infI A(x)dx = 1$. 

Let $\delta \in \left(0,\frac{1}{4}\right)$ and $\widetilde{\eta}_{\delta} \in C_c^{\infty}\left( \left(-\h,\h\right) \right)$ be an even function such that $0\leq \widetilde{\eta}_{\delta}\leq 1$ and $\widetilde{\eta}_{\delta}(x)=1$ for $|x|\leq \h-\delta$. Then, we can define
\begin{equation}
    A_{\delta} = \infI \widetilde{\eta}_{\delta}^2(x)A(x) dx
\end{equation}
and set
\begin{equation} \label{eta}
    \eta_{\delta} = \frac{\widetilde{\eta}_{\delta}(x)\sqrt{A(x)}}{\sqrt{A_\delta}}.
\end{equation}
Then, we have $\eta_{\delta},\eta^2_{\delta} \in C_c^{\infty}\left( \left(-\h,\h\right) \right)$ are real and even. Moreover, $\eta^2_{\delta}\geq 0$ and $\infI \eta^2_{\delta}(x)dx=1$.\\
All these functions are extended by zero outside \(I\).
For each \(\delta\), fix one such cutoff independently of \(T\),
and set
\[
K_\delta(z):=\widehat{\eta_\delta^2}(z).
\]
Then \(K_\delta(0)=1\).

For every fixed \(0<\delta<1/4\), the argument in the proof of
\cite[Lemma~3.2]{Lam26}, with the normalization explained above,
gives
\begin{equation}\label{eq:delta-second-moment}
\sum_{\substack{\rho,\rho'\\0<\gamma,\gamma'\leq T}}
K_\delta\!\left(
i(\rho-\rho')\frac{\log T}{2\pi}
\right)^2
=
\left(
C_{\eta_\delta}
+
O_\delta\!\left(\frac{1}{\sqrt{\log T}}\right)
\right)N(T),
\end{equation}
where
\[
C_{\eta_\delta}
:=
(\eta_\delta^2*\eta_\delta^2)(0)
+
2\int_0^1
\alpha(\eta_\delta^2*\eta_\delta^2)(\alpha)\,d\alpha,
\]
and \(*\) denotes convolution. The same argument shows that
\[
C_{\eta_\delta}\longrightarrow C_{\mathrm{MT}}
\qquad\text{as }\delta\to0.
\]

\section{Vector Space of $f_z,g_z,h_z$}
We follow the real Hilbert-space setup in the proof of
\cite[Proposition~2.1]{Lam26} and record its coordinate realization.

First, we assign each element $z \in Z_T$ to an element in $L^2\lc \lc -\h,\h \rc \rc$. For each $z \in Z_T$, we define
\begin{equation}
    f_z(u) = \eta_{\delta}(u)e^{-2\pi i uz}.
\end{equation}
Then, for $z,s \in Z_T$, if $z \neq s$, then we have $f_z$ and $f_{s}$ are linearly independent. Moreover, the inner product between $f_z$ and $f_{s}$ is defined as
\begin{equation}
    \li f_z,f_s \ri = \hI f_z(u)\overline{f_s(u)} du = \hI \eta_{\delta}^2(u)e^{-2\pi i (z-\overline{s})u}du = \hat{\eta_{\delta}^2}(z-\overline{s}) = K_{\delta}(z-\overline{s}).
\end{equation}
Moreover, we have
\begin{equation}
    \sum_{z,s \in Z_T} K_{\delta}(z-\overline{s}) = \sum_{z,s \in Z_T} K_{\delta}(z-s)
\end{equation}
since $\overline{Z_T} = Z_T$. Note that if $z$ is real, we must have
\begin{equation}\label{one1}
    ||f_z||^2 = \li f_z,f_z \ri = \hI \eta_{\delta}^2(u)du = K_\delta(0) = 1.
\end{equation}
Let
\begin{equation}
    g_z(u) = \frac{f_z(u)+f_{\overline{z}}(u)}{2} \quad \text{and} \quad h_z(u) = \frac{f_z(u)-f_{\overline{z}}(u)}{2i}.
\end{equation}
Then, for any $z \in Z_T$, we have
\begin{equation}\label{K}
\begin{aligned}
    1 &= K_\delta(0) = K_\delta(z-z) =\hI f_z(u)\overline{f_{\overline{z}}(u)} du \\
      &= \hI \la g_z(u) \ra^2 - \la h_z(u) \ra^2 + 2i \mathrm{Re}(g_z(u)\overline{h_z(u)})du.
\end{aligned}
\end{equation}
Since the LHS of \eqref{K} is real, the imaginary part of RHS must be zero. Thus, we have
\begin{equation}\label{one2}
    \hI \la g_z(u) \ra^2 du  - \hI \la h_z(u) \ra^2 du =1.
\end{equation}
Observe that
\begin{equation} \label{basis}
    f_{x_1}, ... , f_{x_{n+r}},g_{z_1}, ... g_{z_k},h_{z_1},...,h_{z_k}
\end{equation}
are linearly independent. Then we can define a vector space by setting \eqref{basis} as the basis and get
\begin{equation}
    W = span\lb f_{x_1}, ... , f_{x_{n+r}},g_{z_1}, ... g_{z_k},h_{z_1},...,h_{z_k} \rb,
\end{equation}
where $\dim(W)=n+r+2k$.

Now, observe that all $f_x,g_z,h_z$ satisfy the property $\overline{\phi(u)} = \phi(-u)$. Then we have
\begin{equation} \label{real}
\begin{aligned}
    \overline{\li \phi_1,\phi_2 \ri} = \hI \overline{\phi_1(u)}\phi_2(u)du &= \hI \phi_1(-u)\overline{\phi_2(-u)}du\\
    &= \hI \phi_1(u)\overline{\phi_2(u)}du = \li \phi_1,\phi_2 \ri.
\end{aligned}
\end{equation}
This implies the inner product must be real. Now we can apply Gram-Schmidt process to construct an orthonormal basis for $W$, denoted as $\lb \psi_1, ... , \psi_{\dim(W)} \rb$, which also satisfies \eqref{real}.

\section{Construction of $G = P + Q$}
We combine the finite-dimensional viewpoint of Alp{\"o}ge and
Furman~\cite[Section~3]{AF26} with Lamzouri's multiset
Hilbert-space framework~\cite[proof of Proposition~2.1]{Lam26}.
The identities and spectral estimates used below are proved explicitly.

For each $\phi_z \in \lb f_{x_1}, ... , f_{x_{n+r}},g_{z_1}, ... g_{z_k},h_{z_1},...,h_{z_k} \rb$, we define
\begin{equation}
    v_{\phi_{z}} = \lc \li \phi_z , \psi_1 \ri, ... ,\li \phi_z , \psi_{\dim(W)} \ri \rc \in \mathbb{R}^{\dim(W)}.
\end{equation}
Then, for any $z,z' \in Z_T$, we have
\begin{equation}
    v_{\phi_{z}}\cdot v_{\phi_{z'}} = \sum_{j} \li \phi_z,\psi_j \ri\li \phi_{z'},\psi_j \ri = \li \phi_z,\phi_{z'} \ri.
\end{equation}
Now, we set
\begin{equation}
    G = \ld \alpha_{ij} \rd, \quad \alpha_{ij} = \sum_{z \in Z_T} \li f_z,\psi_i \ri\li f_z,\psi_j \ri.
\end{equation}
Now, we need to show that $G$ can be expressed as the linear combination of $v_{\phi_z}v_{\phi_z}^T$.
\begin{lem} \label{G}
    \begin{equation}
        G = \sum_{l =1}^{n+r} m_{x_l} v_{f_{x_l}}v_{f_{x_l}}^{T} + 2\sum_{l=1}^{k}m_{z_l}\ld v_{g_{z_l}}v_{g_{z_l}}^{T} - v_{h_{z_l}}v_{h_{z_l}}^{T} \rd,
    \end{equation}
    \begin{equation}
        tr(G) = N(T),
    \end{equation}
    and
    \begin{equation}
        \norm{G}^2_F = \sum_{z,s \in Z_T} K_{\delta}^2(z-s),
    \end{equation}
    where $\norm{\cdot}_F$ is the Frobenius norm.
\end{lem}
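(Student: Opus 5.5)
The plan is to work with complex coordinate vectors and reduce everything to bilinear (not sesquilinear) dot products of real vectors.

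\textbf{Coordinates of $f_z$ and $f_{\overline z}$.} First I record that every $f_z$, $z\in Z_T$, lies in $W$. This is immediate for real $z$. For an off-line zero $z$ it follows from
\[
f_z=g_z+i h_z, \qquad f_{\overline z}=g_z-i h_z .
\]
By \eqref{real}, each $\psi_j$ and each basis element has real inner products with the others, so the vectors $v_{g_z}, v_{h_z}, v_{f_x}$ are real. Using linearity in the first slot,
\[
\langle f_z,\psi_j\rangle=\langle g_z,\psi_j\rangle+i\langle h_z,\psi_j\rangle .
\]
So the (complex) coordinate vector is $v_{f_z}=v_{g_z}+i v_{h_z}$, and likewise $v_{f_{\overline z}}=v_{g_z}-i v_{h_z}$.

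\textbf{The decomposition of $G$.} By definition $G=\sum_{z\in Z_T} v_{f_z}v_{f_z}^{T}$, with the plain transpose and no conjugation. I group the multiset $Z_T$:
\begin{itemize}
\item each real $x_l$ contributes $m_{x_l}v_{f_{x_l}}v_{f_{x_l}}^T$;
\item each off-line pair $z_l,\overline{z}_l$ contributes $m_{z_l}$ times
\[
(v_g+iv_h)(v_g+iv_h)^T+(v_g-iv_h)(v_g-iv_h)^T=2\bigl(v_gv_g^T-v_hv_h^T\bigr).
\]
\end{itemize}
The cross terms cancel, which gives the claimed formula and in particular shows $G$ is real.

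\textbf{The trace.} Since $\operatorname{tr}(vv^T)=v\cdot v=\langle\phi,\phi\rangle$, the formula gives
\[
\operatorname{tr}(G)=\sum_l m_{x_l}\|f_{x_l}\|^2+2\sum_l m_{z_l}\bigl(\|g_{z_l}\|^2-\|h_{z_l}\|^2\bigr).
\]
By \eqref{one1} and \eqref{one2} every summand equals its multiplicity. Comparing with the formula for $N(T)$ in Section 2 then gives $\operatorname{tr}(G)=N(T)$.

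\textbf{The Frobenius norm.} Write $a_{zi}=\langle f_z,\psi_i\rangle$. Since $G$ is real,
\[
\|G\|_F^2=\sum_{i,j}\alpha_{ij}^2=\sum_{i,j}\sum_{z,s}a_{zi}a_{zj}a_{si}a_{sj}=\sum_{z,s}\Bigl(\sum_i a_{zi}a_{si}\Bigr)^2 .
\]
The key identity is the bilinear dot product
\[
v_{f_z}\cdot v_{f_s}=\langle f_z,f_{\overline s}\rangle .
\]
To check it, expand $\langle g_z+ih_z,\;g_s-ih_s\rangle$ sesquilinearly; the conjugation in the second slot turns $-i$ into $+i$. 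This yields
\[
\langle g_z,g_s\rangle+i\langle g_z,h_s\rangle+i\langle h_z,g_s\rangle-\langle h_z,h_s\rangle,
\]
which is exactly $(v_{g_z}+iv_{h_z})\cdot(v_{g_s}+iv_{h_s})$. Here I use that for real vectors $v_\phi\cdot v_{\phi'}=\langle\phi,\phi'\rangle$, since $\{\psi_j\}$ is an orthonormal basis of $W$. The real case is the special case $h=0$.

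Then $\langle f_z,f_{\overline s}\rangle=K_\delta(z-\overline{\overline s})=K_\delta(z-s)$ by the inner product formula of Section 3. Hence $\|G\|_F^2=\sum_{z,s}K_\delta^2(z-s)$.

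\textbf{Main obstacle.} The only delicate point is the conjugation bookkeeping. One has to confirm that the unconjugated product in $\alpha_{ij}$ corresponds to pairing $f_z$ with $f_{\overline s}$, and hence to $K_\delta(z-s)$ rather than $K_\delta(z-\overline s)$. One also needs the inner products among the real basis elements to be real, which \eqref{real} supplies.
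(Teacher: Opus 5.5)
Your proposal is correct and follows the paper's own proof. The paper likewise splits $\alpha_{ij}$ over $R_1,R_2,R_3$ and substitutes $f_{z}=g_z+ih_z$, $f_{\overline z}=g_z-ih_z$ so that the cross terms cancel; it gets the trace from \eqref{one1} and \eqref{one2}, and the Frobenius norm from $\sum_i\langle f_z,\psi_i\rangle\langle f_s,\psi_i\rangle=\langle f_z,f_{\overline s}\rangle=K_\delta(z-s)$, where your explicit sesquilinear check of that identity fills in a step the paper states without comment.
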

\begin{proof}
Split the sum in the definition of $\alpha_{ij}$ by the multiset $R_1,R_2,R_3$. Then
\begin{equation}
    \alpha_{ij} = \sum_{l=1}^{n+r} m_{x_l} \li f_{x_l},\psi_i \ri\li f_{x_l},\psi_j \ri + \sum_{l=1}^{k} m_{z_l} \ld \li f_{z_l},\psi_i \ri\li f_{z_l},\psi_j \ri + \li f_{\overline{z}_l},\psi_i \ri\li f_{\overline{z}_l},\psi_j \ri\rd.
\end{equation}
For the second sum, we have
\begin{equation}
\begin{aligned}
    \sum_{l=1}^{k} & m_{z_l} \ld \li f_{z_l},\psi_i \ri\li f_{z_l},\psi_j \ri + \li f_{\overline{z}_l},\psi_i \ri\li f_{\overline{z}_l},\psi_j \ri\rd \\
    &=\sum_{l=1}^{k} m_{z_l} \ld \li g_{z_l}+ih_{z_l},\psi_i \ri\li g_{z_l}+ih_{z_l},\psi_j \ri + \li g_{\overline{z}_l}+ih_{\overline{z}_l},\psi_i \ri\li g_{\overline{z}_l}+ih_{\overline{z}_l},\psi_j \ri\rd\\
    &=\sum_{l=1}^{k} m_{z_l} [ \li g_{z_l},\psi_i \ri\li g_{z_l},\psi_j \ri - \li h_{z_l},\psi_i \ri\li h_{z_l},\psi_j \ri +i\li h_{z_l},\psi_i \ri\li g_{z_l},\psi_j \ri + i\li g_{z_l},\psi_i \ri\li h_{z_l},\psi_j \ri\\
    &\quad+ \li g_{\overline{z}_l},\psi_i \ri\li g_{\overline{z}_l},\psi_j \ri - \li h_{\overline{z}_l},\psi_i \ri\li h_{\overline{z}_l},\psi_j \ri + i\li h_{\overline{z}_l},\psi_i \ri\li g_{\overline{z}_l},\psi_j \ri + i\li g_{\overline{z}_l},\psi_i \ri\li h_{\overline{z}_l},\psi_j \ri ].
\end{aligned}
\end{equation}
Observe that
\begin{equation}
    g_{\overline{z}}(u) = \frac{f_{\overline{z}}(u)+f_z(u)}{2} = g_z(u) \quad \text{and} \quad h_{\overline{z}}(u) = \frac{f_{\overline{z}}(u)-f_z(u)}{2i} = -h_z(u).
\end{equation}
Then, we obtain
\begin{equation}
\begin{aligned}
    \sum_{l=1}^{k} & m_{z_l} \ld \li f_{z_l},\psi_i \ri\li f_{z_l},\psi_j \ri + \li f_{\overline{z}_l},\psi_i \ri\li f_{\overline{z}_l},\psi_j \ri\rd \\
    &=2\sum_{l=1}^k m_{z_l} \ld \li g_{z_l},\psi_i \ri\li g_{z_l},\psi_j \ri - \li h_{z_l},\psi_i \ri\li h_{z_l},\psi_j \ri \rd
\end{aligned}
\end{equation}
This implies that we have
\begin{equation}
    \alpha_{ij} = \sum_{l=1}^{n+r} m_{x_l} \li f_{x_l},\psi_i \ri\li f_{x_l},\psi_j \ri + 2\sum_{l=1}^k m_{z_l} \ld \li g_{z_l},\psi_i \ri\li g_{z_l},\psi_j \ri - \li h_{z_l},\psi_i \ri\li h_{z_l},\psi_j \ri \rd,
\end{equation}
and
\begin{equation}
    G = \sum_{l =1}^{n+r} m_{x_l} v_{f_{x_l}}v_{f_{x_l}}^{T} + 2\sum_{l=1}^{k}m_{z_l}\ld v_{g_{z_l}}v_{g_{z_l}}^{T} - v_{h_{z_l}}v_{h_{z_l}}^{T} \rd.
\end{equation}
For $tr(G)$, we have
\begin{equation}
\begin{aligned}
    tr(G) = \sum_{j} \alpha_{jj} &= \sum_j \sum_{z\in Z_T}\li f_z,\psi_j \ri^2 \\
    &= \sum_j \ld \sum_{l=1}^{n+r} m_{x_l}\li f_{x_l},\psi_j \ri^2 + 2\sum_{l=1}^{k}m_{z_l}\lc \li g_{z_l},\psi_j \ri^2 - \li h_{z_l},\psi_j \ri^2 \rc  \rd\\
    &= \sum_{l=1}^{n+r} m_{x_l}\li f_{x_l},f_{x_l} \ri + 2\sum_{l=1}^{k}m_{z_l}\lc \li g_{z_l},g_{z_l} \ri - \li h_{z_l},h_{z_l} \ri \rc.
\end{aligned}
\end{equation}
Then, by \eqref{one1} and \eqref{one2}, we have
\begin{equation}
\begin{aligned}
    tr(G) &= \sum_{l=1}^{n+r} m_{x_l}\li f_{x_l},f_{x_l} \ri + 2\sum_{l=1}^{k}m_{z_l}\lc \li g_{z_l},g_{z_l} \ri - \li h_{z_l},h_{z_l} \ri \rc\\
    &= \sum_{l=1}^{n+r} m_{x_l} + 2\sum_{l=1}^{k}m_{z_l}\\
    &= N(T).
\end{aligned}
\end{equation}
Finally, we have
\begin{equation}
\begin{aligned}
    \norm{G}^2_F = \sum_{i,j} \alpha_{ij}^2 &= \sum_{i,j} \sum_{z,s \in Z_T} \li f_z,\psi_i \ri\li f_z,\psi_j \ri \li f_s,\psi_i \ri\li f_s,\psi_j \ri \\
    &= \sum_{z,s \in Z_T}\lc\sum_{i} \li f_z,\psi_i \ri\li f_s,\psi_i \ri\rc^2\\
    &= \sum_{z,s \in Z_T} \li f_z,f_{\overline{s}} \ri^2 \\
    &= \sum_{z,s \in Z_T} K_{\delta}^2(z-s).
\end{aligned}
\end{equation}
\end{proof}
For $x \in R_1$, we define
\begin{equation}
    P = \sum_{l=1}^n v_{f_{x_l}}v_{f_{x_l}}^T
\end{equation}
and set
\begin{equation}
    Q = G-P.
\end{equation}
By Lemma \ref{G}, we know that $G$ is Hermitian. Thus, we have $P,Q$ both Hermitian matrices. Moreover, one can observe that $P \succeq 0$ by its definition. Next, from the proof of lemma \ref{G}, we have
\begin{equation}
\begin{aligned}
    tr(P) &= \sum_{l=1}^n \sum_j \li f_{x_l},\psi_j \ri^2\\
    &= \sum_{l=1}^n \li f_{x_l},f_{x_l} \ri\\
    &= n.
\end{aligned}
\end{equation}
Since $tr(G) = tr(P) + tr(Q)$, we have
\begin{equation}
    tr(Q) = N(T) - n.
\end{equation}
Finally, observe that
\begin{equation}
    Q = \sum_{l =n+1}^{n+r} m_{x_l} v_{f_{x_l}}v_{f_{x_l}}^{T} + 2\sum_{l=1}^{k}m_{z_l}\ld v_{g_{z_l}}v_{g_{z_l}}^{T} - v_{h_{z_l}}v_{h_{z_l}}^{T} \rd.
\end{equation}
Set $n_+(Q)$ be the number of positive eigenvalues of $Q$, $n_-(Q)$ be the number of negative eigenvalues of $Q$, and
\begin{equation}
    V = \ld v_{f_{x_{n+1}}} \dots v_{f_{x_{n+r}}} v_{g_{z_1}} \dots v_{g_{z_k}} v_{h_{z_1}} \dots v_{h_{z_k}} \rd.
\end{equation}
Since the functions defining the columns of \(V\) are linearly independent, their coordinate vectors with respect to the basis \(\{\psi_j\}\) are also linearly independent. Hence \(V\) has full column rank.
Then, we have
\begin{lem} $n_+(Q) = r+k \quad \text{and} \quad n_-(Q) = k.$ \end{lem}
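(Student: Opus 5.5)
The plan is to write \(Q\) as a congruence \(Q = V D V^{T}\), where
\[
D=\mathrm{diag}\bigl(m_{x_{n+1}},\dots,m_{x_{n+r}},\,2m_{z_1},\dots,2m_{z_k},\,-2m_{z_1},\dots,-2m_{z_k}\bigr)
\]
is the \((r+2k)\times(r+2k)\) diagonal matrix of coefficients. This factorization follows at once from the displayed formula for \(Q\) obtained from Lemma~\ref{G}: each term \(c\,v v^{T}\) equals column \(v\) of \(V\) times the diagonal entry \(c\) times its transpose. Since every multiplicity satisfies \(m\ge 1\), \(D\) has exactly \(r+k\) positive and \(k\) negative diagonal entries, and no zero entries.

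The key step is to transfer this inertia from \(D\) to \(Q\). Since \(V\) is a real \(\dim(W)\times(r+2k)\) matrix of full column rank, I would extend its columns to a basis of \(\mathbb R^{\dim(W)}\). This gives an invertible \(S=[V\ V']\) with
\[
Q=S\begin{pmatrix} D & 0\\ 0 & 0\end{pmatrix}S^{T}.
\]
Sylvester's law of inertia then shows that \(Q\) has the same numbers of positive, negative, and zero eigenvalues as \(\mathrm{diag}(D,0)\). That is, \(n_+(Q)=r+k\), \(n_-(Q)=k\), and the kernel has dimension \(\dim(W)-r-2k=n\). Here \(Q\) is real symmetric because all inner products are real by \eqref{real}, so Sylvester's law applies directly.

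An alternative to the basis extension is a direct argument. On the subspace \(U=\operatorname{span}\{V e_i : D_{ii}>0\}\), of dimension \(r+k\) by injectivity of \(V\), I would write \(y=Va\) and use the left inverse \(L\) of \(V\) (i.e. \(LV=I\)) to show \(y^{T}Qy>0\) for \(y\neq 0\). The symmetric argument handles the negative part, and the rank bound \(\operatorname{rank}Q\le r+2k\) closes the count.

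The main obstacle is the full-column-rank claim for \(V\), which the paper has already asserted from the linear independence of \eqref{basis}. The only care needed is that the coefficients in \(D\) are nonzero with the stated signs. In particular, the \(h_{z_l}\)-terms carry coefficient \(-2m_{z_l}<0\) and the \(g_{z_l}\)-terms \(+2m_{z_l}>0\), which gives the split \(r+k\) versus \(k\).
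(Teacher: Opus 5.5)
Your main argument is correct and is the paper's proof. The paper writes $Q=VDV^{T}$ with the same diagonal $D$ and invokes Sylvester's law of inertia directly. Your extension of $V$ to an invertible $S=[V\ V']$ with $Q=S\,\mathrm{diag}(D,0)\,S^{T}$ just makes explicit a step the paper leaves implicit, since $V$ is not square.

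The alternative direct argument you sketch, however, uses the wrong subspace. Take $y=Va$ with $a$ supported on the indices where $D_{ii}>0$. Then $y^{T}Qy=(V^{T}Va)^{T}D\,(V^{T}Va)$, and $V^{T}Va$ generally has nonzero components on the negative indices, so this need not be positive. For example, with $v_1=e_1$, $v_2=2e_1+e_2$ and $Q=v_1v_1^{T}-v_2v_2^{T}$, the vector $v_1\in U$ gives $v_1^{T}Qv_1=1-4=-3$.

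The left inverse should instead define the test vectors. For $y=L^{T}a$ one has $V^{T}y=(LV)^{T}a=a$, hence $y^{T}Qy=a^{T}Da>0$ for nonzero $a$ supported on the positive indices. Moreover, $L^{T}$ is injective because $LV=I$ forces $L$ to have full row rank, so this subspace has dimension $r+k$. With this subspace, the analogous negative one, and $\operatorname{rank}Q\le r+2k$, the min--max count goes through. Your first argument already proves the lemma, so this only affects the optional alternative.
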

\begin{proof}
    Observe that
    \begin{equation}
        Q = V \begin{bmatrix}
        m_{x_{n+1}} & 0       & 0       & 0        & 0      & \cdots & 0           & 0      & 0 \\
        0           & \ddots  & 0       & 0        & 0      & \cdots & 0           & 0      & 0 \\
        0           & 0       & m_{x_{n+r}} & 0    & 0      & \cdots & 0           & 0      & 0 \\
        0           & 0       & 0       & 2m_{z_1} & 0      & \cdots & 0           & 0      & 0 \\
        0           & 0       & 0       & 0        & \ddots & \ddots & \vdots      & \vdots & \vdots \\
        \vdots      & \vdots  & \vdots  & \vdots   & \ddots & 2m_{z_k} & 0          & 0      & 0 \\
        0           & 0       & 0       & 0        & \dots  & 0      & -2m_{z_1}   & 0      & 0 \\
        0           & 0       & 0       & 0        & \dots  & 0      & 0           & \ddots & 0 \\
        0           & 0       & 0       & 0        & \dots  & 0      & 0           & 0      & -2m_{z_k}
        \end{bmatrix} V^T.
    \end{equation}
    Since each $m_z>0$, Sylvester's law of inertia~\cite{Bhatia1997} gives
    \begin{equation}
        n_+(Q) = r+k \quad \text{and} \quad n_-(Q) = k.
    \end{equation}
\end{proof}
Set
\begin{equation}
    \Psi(x) = 
    \begin{cases}
        x^2, \quad &x \in\ld 0,2 \rd\\
        4x-4,  \quad &x> 2.
    \end{cases}
\end{equation}
Observe that this function is convex, nonnegative, and $\Psi(0)=0$. Then we have
\begin{lem}\label{PQ}
    For any $p,q \geq 0$, we have
    \begin{equation}
        (p-q)^2 \geq \Psi(p) - 4q.
    \end{equation}
\end{lem}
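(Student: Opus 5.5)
We prove $(p-q)^2 \ge \Psi(p)-4q$ for all $p,q\ge 0$ by elementary case analysis on $p$, following the two pieces in the definition of $\Psi$. The strategy is to expand $(p-q)^2 = p^2 - 2pq + q^2$ and compare it directly with $\Psi(p)-4q$. In each case we show the difference is nonnegative, using only $q\ge 0$.

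\emph{Case $0\le p\le 2$.} Here $\Psi(p)=p^2$, so the claim reduces to $-2pq+q^2 \ge -4q$, that is,
\begin{equation*}
q\,(q + 4 - 2p) \ge 0.
\end{equation*}
Since $q\ge 0$ and $p\le 2$ gives $4-2p\ge 0$, both factors are nonnegative, and this case is done.

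\emph{Case $p>2$.} Here $\Psi(p)=4p-4$, and the claim becomes $p^2-2pq+q^2 - 4p + 4 + 4q \ge 0$. The natural move is to recognize a perfect square. Indeed
\begin{equation*}
(p-q-2)^2 = p^2 + q^2 + 4 - 2pq - 4p + 4q,
\end{equation*}
which is exactly the left-hand side. So the inequality holds; in this range it does not even need $q\ge 0$.

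An alternative, more conceptual route is available. For fixed $q$, the function $p\mapsto (p-q)^2+4q$ is convex. At $p=2$ it dominates $\Psi$ with matching slope only when $q=0$, and $\Psi$ is the $C^1$ gluing of $x^2$ with its tangent line at $x=2$. The direct computation above is cleaner, though, and I would present that.

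There is no real obstacle. The only step needing care is spotting the completed square in the second case, or equivalently choosing the right algebraic rearrangement. The first case uses the sign constraint $p\le 2$ precisely where $\Psi$ switches pieces.
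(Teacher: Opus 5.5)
Your proof is correct and follows essentially the same route as the paper. It splits at $p=2$, uses the factorization $q(q+4-2p)\ge 0$ in the first case, and uses the perfect square $(p-q-2)^2=(q-(p-2))^2$ in the second.
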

\begin{proof}
    If $p \leq 2$, then $\Psi(p)=p^2$ and
    \begin{equation}
        (p-q)^2 - (p^2 - 4q) = q^2-2pq+4q = q(q-2p+4) \geq 0.
    \end{equation}
    If $p > 2$, then $\Psi(p)=4p-4$ and
    \begin{equation}
        (p-q)^2 - (4p - 4 - 4q) = p^2 -2pq +q^2 -4p + 4 + 4q = (q-(p-2))^2 \geq 0.
    \end{equation}
\end{proof}

\begin{lem} \label{P+Q} $\norm{P+Q}^2_{F} \geq tr\Psi(P) + 4tr(Q) - 4n_+(Q).$\end{lem}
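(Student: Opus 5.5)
The plan is to absorb the positive part of \(Q\) into \(P\) and apply Lemma~\ref{PQ} eigenvalue by eigenvalue. The cost of absorbing \(Q_+\) into \(P\) will then be controlled by a Weyl interlacing argument.

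\textbf{Step 1: splitting \(Q\).} Write the spectral decomposition \(Q=Q_+-Q_-\), where \(Q_+,Q_-\succeq 0\), \(Q_+Q_-=0\), \(\operatorname{rank}Q_+=n_+(Q)\), and \(tr(Q)=tr(Q_+)-tr(Q_-)\). Set \(M=P+Q_+\). Then \(M\succeq 0\), and \(P+Q=M-Q_-\).

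\textbf{Step 2: applying Lemma~\ref{PQ}.} Let \(\{e_i\}\) be an orthonormal eigenbasis of \(M\) with eigenvalues \(\mu_i\ge 0\), and put \(b_i=\langle e_i,Q_-e_i\rangle\ge 0\). Keeping only the diagonal entries in this basis gives
\[
\norm{P+Q}_F^2\ \ge\ \sum_i \langle e_i,(M-Q_-)e_i\rangle^2=\sum_i(\mu_i-b_i)^2 .
\]
Lemma~\ref{PQ} applied to each term then yields
\[
\norm{P+Q}_F^2\ \ge\ \sum_i\Psi(\mu_i)-4\sum_i b_i=tr\Psi(M)-4\,tr(Q_-).
\]

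\textbf{Step 3: reduction to a penalty bound.} Since \(tr(Q_-)=tr(Q_+)-tr(Q)\), it remains to show
\[
tr\Psi(M)\ \ge\ tr\Psi(P)+4\,tr(Q_+)-4n_+(Q).
\]
For \(x\ge0\) write \(\Psi(x)=4x-4+\varphi(x)\), where \(\varphi(x)=(x-2)^2\) for \(0\le x\le 2\) and \(\varphi(x)=0\) for \(x>2\). Then \(\varphi\) is nonincreasing on \([0,\infty)\) and satisfies \(0\le\varphi\le\varphi(0)=4\). Let \(d=\dim W\). Because \(tr(M)=tr(P)+tr(Q_+)\), the linear parts contribute exactly \(4\,tr(Q_+)\), and the required inequality reduces to
\[
\sum_{i}\varphi(p_i)-\sum_i\varphi(\mu_i)\ \le\ 4n_+(Q),
\]
where \(p_1\ge\cdots\ge p_d\ge0\) are the eigenvalues of \(P\) and \(\mu_1\ge\cdots\ge\mu_d\) those of \(M\).

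\textbf{Step 4: interlacing, the key step.} Since \(Q_+\succeq0\) has rank \(n_+=n_+(Q)\), Weyl's inequalities give \(p_i\ge\mu_{i+n_+}\) for \(i\le d-n_+\). As \(\varphi\) is nonincreasing, \(\varphi(p_i)\le\varphi(\mu_{i+n_+})\) for these \(i\). The remaining \(n_+\) indices each contribute \(\varphi(p_i)\le 4\), since \(p_i\ge0\). Summing,
\[
\sum_i\varphi(p_i)\le\sum_{j>n_+}\varphi(\mu_j)+4n_+\le\sum_j\varphi(\mu_j)+4n_+ .
\]
This proves the reduced inequality, and hence the lemma.

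The main obstacle is Step~4. The naive route of taking diagonals in an eigenbasis of \(P\) loses control of the positive part of \(Q\). Instead, the decomposition \(\Psi=4x-4+\varphi\) with \(\varphi\) monotone and bounded by \(4\) is what converts the rank-\(n_+\) perturbation into a penalty of at most \(4\) per positive eigenvalue.
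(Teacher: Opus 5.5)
Your proof is correct, but it takes a different route from the paper's. The paper never absorbs \(Q_+\) into \(P\). Instead it expands
\[
\norm{P+Q}_F^2=\norm{P-Q_-}_F^2+2\operatorname{tr}\bigl((P-Q_-)Q_+\bigr)+\norm{Q_+}_F^2
\]
and uses \(Q_+Q_-=0\) and \(\operatorname{tr}(PQ_+)\ge0\) to drop the cross term. This decouples the two parts. First, \(\norm{Q_+}_F^2=\sum_j q_j^2\ge\sum_j(4q_j-4)\) produces the \(-4n_+(Q)\) penalty directly. Second, \(\norm{P-Q_-}_F^2\ge\sum_j(p_j-\widetilde q_j)^2\) follows from von Neumann's trace inequality, and Lemma~\ref{PQ} is then applied to the sorted eigenvalues of \(P\) and \(Q_-\).

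So the difficulty you flag in Step~4 can be avoided altogether, and the paper's argument is shorter. Your route trades von Neumann's inequality for the elementary diagonal-entry bound in an eigenbasis of \(M\). You then pay for the absorption with Weyl interlacing and the decomposition \(\Psi(x)=4x-4+\varphi(x)\), where \(\varphi\) is nonincreasing and bounded by \(4\).

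What your route buys is that the orthogonality \(Q_+Q_-=0\) is never used. Steps~2--4 only need \(Q=Q_+-Q_-\) with \(Q_\pm\succeq0\), so they give
\[
\norm{P+Q}_F^2\ge\operatorname{tr}\Psi(P)+4\operatorname{tr}(Q)-4\operatorname{rank}(Q_+)
\]
for any such splitting. Applied to the splitting read off from the definition of \(Q\), this already yields the bound with \(r+k\) in place of \(n_+(Q)\), without the inertia computation. In that splitting the \(m_{x_l}v_{f_{x_l}}v_{f_{x_l}}^T\) and \(2m_{z_l}v_{g_{z_l}}v_{g_{z_l}}^T\) terms form \(Q_+\), and the \(2m_{z_l}v_{h_{z_l}}v_{h_{z_l}}^T\) terms form \(Q_-\). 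By contrast, the paper's cross-term step relies on the spectral splitting: for a nonorthogonal splitting, the extra term \(-2\operatorname{tr}(Q_-Q_+)\le0\) can make the cross term negative.
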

\begin{proof}
    Diagonalize $Q = \sum_{l} \lambda_l u_l u_l^T$ with $\lb u_l \rb$ be orthonormal and set
    \begin{equation}
        Q_+= \sum_{\lambda_l>0} \lambda_l u_l u_l^T \quad \text{and} \quad Q_-= \sum_{\lambda_l<0} (-\lambda_l) u_l u_l^T.
    \end{equation}
    Then, $Q = Q_+ - Q_-$, where $Q_{\pm}\succeq 0$ and $Q_+Q_-=0$. Thus, we obtain
    \begin{equation}
        \norm{P+Q}^2_F = \norm{P-Q_-}^2_F + 2tr\lc \lc P-Q_- \rc Q_+ \rc + \norm{Q_+}_F^2.
    \end{equation}
    Since $P,Q_+ \succeq 0$, we have $tr(PQ_+)\geq0$ and
    \begin{equation}
        \norm{P+Q}^2_F \geq \norm{P-Q_-}^2_F + \norm{Q_+}_F^2.
    \end{equation}
    Let $q_{1},...,q_{n_+(Q)}>0$ be the positive eigenvalues of $Q$, then
    \begin{equation}
        \norm{Q_+}_F^2 = \sum_{j} q_j^2 \geq \sum_{j} (4q_j - 4) = 4tr(Q_+) -4n_+(Q). 
    \end{equation}
    Let $p_{1}\geq...\geq p_{\dim(W)}\ge 0$ and $\widetilde{q}_{1}\geq...\geq \widetilde{q}_{\dim(W)}\ge 0$ be the eigenvalues of $P$ and $Q_-$, respectively, including zero eigenvalues. Then
    \begin{equation}
        \norm{P-Q_-}^2_F = tr(P^2) + tr(Q_-^2)-2tr(PQ_-).
    \end{equation}
    Since von Neumann's trace inequality~\cite{Bhatia1997} gives $tr(PQ_-) \leq \sum_j p_j \widetilde{q}_j$, we obtain
    \begin{equation}
        \norm{P-Q_-}^2_F \geq tr(P^2) + tr(Q_-^2) - 2\sum_j p_j \widetilde{q}_j = \sum_{j} (p_j - \widetilde{q}_j)^2.
    \end{equation}
    By lemma \ref{PQ}, we have
    \begin{equation}
        \sum_{j} (p_j - \widetilde{q}_j)^2 \geq \sum_j (\Psi(p_j)-4\widetilde{q}_j) = \sum_j \Psi(p_j)-4tr(Q_-) = tr\Psi(P)-4tr(Q_-).
    \end{equation}
    Finally, we have
    \begin{equation}
        \norm{P+Q}^2_F \geq 4tr(Q_+) -4n_+(Q) + tr\Psi(P)-4tr(Q_-) = tr\Psi(P) + 4tr(Q) - 4n_+(Q).
    \end{equation}
\end{proof}
Recall that \(\operatorname{tr}(Q)=N(T)-n\) counts, with multiplicity, the zeros that are not both simple and on the critical line. we have
\begin{lem}\label{PsiBound} $ tr\Psi(P) + 2tr(Q) \leq \norm{G}_F^2$ \end{lem}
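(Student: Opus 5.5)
The plan is to read the bound off Lemma~\ref{P+Q}, which already gives
\[
\norm{G}_F^2=\norm{P+Q}_F^2\geq tr\Psi(P)+4tr(Q)-4n_+(Q),
\]
where $G=P+Q$ holds by the definition $Q=G-P$. The claimed inequality $tr\Psi(P)+2tr(Q)\leq\norm{G}_F^2$ therefore follows once we check
\[
4tr(Q)-4n_+(Q)\geq 2tr(Q),\qquad\text{that is,}\qquad tr(Q)\geq 2n_+(Q).
\]
So everything reduces to a counting comparison between the trace of $Q$ and its number of positive eigenvalues.

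For the left side, we use the explicit expression for $Q$ together with \eqref{one1} and \eqref{one2}, exactly as in the proof of Lemma~\ref{G}. This gives
\[
tr(Q)=N(T)-n=\sum_{l=n+1}^{n+r}m_{x_l}+2\sum_{l=1}^{k}m_{z_l}.
\]
For the right side, the inertia lemma above gives $n_+(Q)=r+k$.

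The comparison is then termwise:
\begin{itemize}
\item Each $x_l$ with $n+1\leq l\leq n+r$ lies in $R_2$, so it is a multiple critical zero and $m_{x_l}\geq 2$. The first sum is therefore at least $2r$.
\item Each off-line zero has $m_{z_l}\geq 1$, so the second term is at least $2k$.
\end{itemize}
Together these give $tr(Q)\geq 2r+2k=2n_+(Q)$, which is what we need.

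The only step requiring care is this multiplicity bookkeeping. We must confirm that every zero contributing to $tr(Q)$ carries weight at least $2$ per positive eigenvalue. For critical zeros this holds because $R_2$ consists only of multiple zeros. For an off-line pair $z_l,\overline{z}_l$, the pair contributes weight $2m_{z_l}$ but only one positive eigenvalue, coming from $g_{z_l}$. The negative eigenvalues, coming from the $h_{z_l}$, impose no cost, since Lemma~\ref{P+Q} only subtracts $4n_+(Q)$. Beyond this check, the argument is a direct substitution.
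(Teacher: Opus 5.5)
Your proposal is correct and follows essentially the same route as the paper: combine Lemma~\ref{P+Q} with $n_+(Q)=r+k$ and the bound $tr(Q)\geq 2(r+k)$. The paper simply asserts that bound, and you justify it explicitly via $m_{x_l}\geq 2$ on $R_2$ and $2m_{z_l}\geq 2$ for each off-line pair.
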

\begin{proof}
    By lemma \ref{P+Q}, we have
    \begin{equation}
    \begin{aligned}
        \norm{G}_F^2 \geq tr\Psi(P) + 4tr(Q) - 4n_+(Q)
    \end{aligned}
    \end{equation}
    Observe that $tr(Q) \geq 2(r+k)$ and $n_+(Q) = r+k$, then
    \begin{equation}
        \norm{G}_F^2 \geq tr\Psi(P) + 4tr(Q) -4(r+k)\geq tr\Psi(P) + 2tr(Q).
    \end{equation}
\end{proof}

\section{Lower Bound of $K_{\delta}(c)$}
\begin{lem} \label{lem:Kbound}
The function \(K_\delta\) is real-valued and even on \(\mathbb R\), and strictly decreasing on \((0,1)\).
\end{lem}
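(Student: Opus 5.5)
Since \(\eta_\delta^2\) is real, even, nonnegative and supported in \(I=[-\tfrac12,\tfrac12]\), I would first write
\[
K_\delta(\xi)=\hI \eta_\delta^2(u)e^{-2\pi i \xi u}\,du=\hI \eta_\delta^2(u)\cos(2\pi \xi u)\,du ,
\]
using that the sine part vanishes by evenness. This formula shows at once that \(K_\delta\) is real-valued and even on \(\mathbb R\). For monotonicity I plan to differentiate under the integral sign, which is justified by compact support and smoothness:
\[
K_\delta'(\xi)=-2\pi\hI u\,\eta_\delta^2(u)\sin(2\pi\xi u)\,du=-4\pi\int_0^{1/2}u\,\eta_\delta^2(u)\sin(2\pi\xi u)\,du .
\]
For \(0<\xi<1\) and \(0<u\le \tfrac12\) we have \(0<2\pi\xi u<\pi\), so \(\sin(2\pi\xi u)>0\). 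The weight \(u\,\eta_\delta^2(u)\) is nonnegative on \((0,\tfrac12)\).

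The one point requiring care is strictness, that is, showing the integrand is not identically zero. By construction \eqref{eta}, \(\eta_\delta^2(u)=\widetilde\eta_\delta^2(u)A(u)/A_\delta\). Here \(\widetilde\eta_\delta=1\) on \([0,\tfrac12-\delta]\), and \(A>0\) on \(I\) because \(\sqrt2\,|u|\le 1/\sqrt2<\pi/2\). Also \(A_\delta>0\). Hence \(\eta_\delta^2>0\) on \((0,\tfrac12-\delta]\), an interval of positive length since \(\delta<\tfrac14\). The integrand is therefore continuous, nonnegative, and strictly positive on a set of positive measure, so \(K_\delta'(\xi)<0\) for all \(\xi\in(0,1)\).

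Strict monotonicity on \((0,1)\) then follows from the mean value theorem. By continuity this extends to \([0,1]\), which gives \(K_\delta(\xi)<K_\delta(0)=1\) there; that is presumably the form used later to bound \(K_\delta(c)\) for \(c\le\tfrac14\).

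I expect no real obstacle. The only substantive checks are the sign of \(\sin\) on the relevant range, where the support in \((-\tfrac12,\tfrac12)\) and the condition \(\xi<1\) together give the angle bound \(<\pi\), and the positivity of \(\eta_\delta^2\) on a nontrivial interval.
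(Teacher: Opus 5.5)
Your proposal is correct and follows essentially the same route as the paper: the cosine-transform formula gives reality and evenness, and you then differentiate under the integral, use $\sin(2\pi\xi u)>0$ for $0<\xi<1$, $0<u\le\tfrac12$, and conclude with the mean value theorem. You also check explicitly that $\eta_\delta^2>0$ on $(0,\tfrac12-\delta]$, which is what makes $K_\delta'(\xi)<0$ strict; the paper leaves this step implicit.
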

\begin{proof}
    By definition, we have
    \begin{equation}
    \begin{aligned}
        K_{\delta}(\xi) &= \hI \eta^2_{\delta}(u)e^{-2\pi i \xi u}du \\
        & = \hI \eta^2_{\delta}(u)\cos{(2\pi \xi u)}du - i\hI \eta^2_{\delta}(u)\sin{(2\pi \xi u)}du.
    \end{aligned}
    \end{equation}
    Since $\eta^2_{\delta}(u)$ is even and $\sin{(2\pi \xi u)}$ is odd, we have
    \begin{equation}
        \hI \eta^2_{\delta}(u)\sin{(2\pi \xi u)}du = 0.
    \end{equation}
    Thus, we must have
    \begin{equation}
        K_{\delta}(\xi) = \hI \eta^2_{\delta}(u)\cos{(2\pi \xi u)}du
    \end{equation}
    which implies $K_{\delta}(\xi)$ is real. Since $\cos{(2\pi \xi u)} = \cos{(-2\pi \xi u)}$, we have
    \begin{equation}
        K_{\delta}(-\xi) = K_{\delta}(\xi).
    \end{equation}
    Observe that
    \begin{equation}
    \begin{aligned}
        K'_{\delta}(\xi) &= -2\pi\hI \eta^2_{\delta}(u)\sin{(2\pi \xi u)}u \text{ }du\\
        &= -4\pi\int_{0}^{\h} \eta^2_{\delta}(u)\sin{(2\pi \xi u)}u \text{ }du.
    \end{aligned}
    \end{equation}
    For $\xi \in \lc 0,1 \rc$ and $u \in \lc 0, \h \rc$, we have
    \begin{equation}
        0 < 2\pi \xi u < \pi.
    \end{equation}
    Thus, we must have $\sin{(2\pi \xi u)} > 0$ and
    \begin{equation}
        K'_{\delta}(\xi) = -4\pi\int_{0}^{\h} \eta^2_{\delta}(u)\sin{(2\pi \xi u)}u \text{ }du<0.
    \end{equation}
    Finally, for $0<\xi_1<\xi_2<1$ and by the mean value theorem, there exists $\theta \in (\xi_1,\xi_2)$ such that
    \begin{equation}
        K_{\delta}(\xi_2)-K_{\delta}(\xi_1) = K'_{\delta}(\theta)(\xi_2-\xi_1)<0,
    \end{equation}
    which implies $K_{\delta}$ is strictly decreasing.
\end{proof}
Thus, by Lemma ~\ref{lem:Kbound}, for any $x,y \in R_1$ with $\la x-y \ra \leq c<1$, we have
\begin{equation} \label{K_d(c)}
    v_{f_{x}}\cdot v_{f_{y}} = \li f_{x},f_{y} \ri = K_{\delta}(x-y)=K_{\delta}(\la x-y\ra)\geq K_{\delta}(c).
\end{equation}
Observe that by \eqref{eta}
\begin{equation}
    K_{\delta}(c) = \hI \eta^2_{\delta}(u)e^{-2\pi i c u}du = \hI \frac{\widetilde{\eta}_{\delta}^2(u)A(u)}{A_\delta}e^{-2\pi i c u}du
\end{equation}
and
\begin{equation}
    \hat{A}(c) = \hI A(u) e^{-2\pi i c u}du = \frac{1}{\sqrt{2}\sin{\lc \frac{1}{\sqrt{2}} \rc}}\lc \frac{\sin{\lc \frac{1}{\sqrt{2}} +\pi c \rc}}{\sqrt{2}+2\pi c} + \frac{\sin{\lc \frac{1}{\sqrt{2}} -\pi c \rc}}{\sqrt{2}-2\pi c} \rc
\end{equation}
The quotient at \(c = 1 /(\sqrt{2} \pi)\) is interpreted by continuity. Then, we have
\begin{lem}
    For every fixed $c \in (0,1)$, there exists $\delta \in \lc 0, \frac{1}{4} \rc$ such that
    \begin{equation}
        K_{\delta}(c) \geq \hat{A}(c) - \frac{2\lc 1-A_{\delta} \rc}{A_\delta} > 0.
    \end{equation}
\end{lem}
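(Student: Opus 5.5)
We prove the two inequalities separately: first an estimate of $K_\delta(c)-\hat A(c)$ valid for every $\delta$, then a choice of small $\delta$ that makes the right-hand side positive.

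\textbf{The estimate.} Write
\[
K_\delta(c)=\frac{1}{A_\delta}\hI \widetilde{\eta}_\delta^2(u)A(u)\cos(2\pi c u)\,du ,
\]
which is real by the computation in the proof of Lemma~\ref{lem:Kbound}. Compare it with $\hat A(c)=\hI A(u)\cos(2\pi cu)\,du$:
\[
K_\delta(c)-\hat A(c)=\Bigl(\frac{1}{A_\delta}-1\Bigr)\hI \widetilde{\eta}_\delta^2A\cos(2\pi cu)\,du-\hI (1-\widetilde{\eta}_\delta^2)A\cos(2\pi cu)\,du .
\]
Use $|\cos|\le 1$, $A\ge 0$ and $0\le \widetilde{\eta}_\delta\le 1$. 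The first integral then has absolute value at most $A_\delta$. The second has absolute value at most $\hI (1-\widetilde{\eta}_\delta^2)A = 1-A_\delta$, since $\hI A=1$. Because $0<A_\delta\le 1$, we get
\[
|K_\delta(c)-\hat A(c)|\le (1-A_\delta)+(1-A_\delta)\le \frac{2(1-A_\delta)}{A_\delta}.
\]
This gives $K_\delta(c)\ge \hat A(c)-2(1-A_\delta)/A_\delta$ for every $\delta$.

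\textbf{Positivity of $\hat A(c)$.} We need $\hat A(c)>0$ for $c\in(0,1)$. The direct route is to note that $A(u)>0$ on $I$ and $\cos(2\pi c u)>0$ for $|u|<\frac12$ as long as $c\le \frac12$. That settles the range $0<c\le \frac12$, and in particular the range $c\le\frac14$ used in Theorem~\ref{THM}.

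For $\frac12<c<1$ we use the closed formula. Set $a=1/\sqrt2\approx0.707$. The bracket in $\hat A(c)$ is
\[
\frac{\sin(a+\pi c)}{\sqrt2+2\pi c}+\frac{\sin(a-\pi c)}{\sqrt2-2\pi c}.
\]
For $c$ near $1$ we have $a+\pi c\approx 3.85$, so the first term is negative. In the second term, $a-\pi c\approx-2.43$ and $\sqrt2-2\pi c<0$, so the second term is positive. We check that the second term dominates: at $c=1$ the bracket is about $-0.087+0.135>0$.

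The expected main obstacle is establishing positivity of this bracket uniformly on $(\frac12,1)$, or more realistically on $[\frac12,1)$ by monotonicity or interval bounds. An alternative is to rewrite the bracket as a single quotient and show the numerator has constant sign. If this fails near some $c$, we restrict the lemma to the range $c\le\frac14$ actually needed.

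\textbf{Choosing $\delta$.} Since $\widetilde{\eta}_\delta=1$ on $|u|\le\frac12-\delta$, we have
\[
1-A_\delta\le \int_{\frac12-\delta<|u|<\frac12}A\le 2\delta\max A .
\]
Hence $A_\delta\to1$ as $\delta\to0$, and so $2(1-A_\delta)/A_\delta\to0$. Since $\hat A(c)>0$, we can choose $\delta\in(0,\frac14)$ small enough that $2(1-A_\delta)/A_\delta<\hat A(c)$. Combined with the estimate above, this proves the lemma.
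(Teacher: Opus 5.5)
Your estimate of $|K_\delta(c)-\hat A(c)|$ and your proof that $A_\delta\to 1$ are correct, and they are essentially the paper's argument. The paper applies the pointwise triangle inequality $|\widetilde{\eta}_\delta^2-A_\delta|\le (1-\widetilde{\eta}_\delta^2)+(1-A_\delta)$ inside one integral; your splitting into two integrals even gives the sharper bound $2(1-A_\delta)$.

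The gap is the positivity of $\hat A(c)$ for $\frac12<c<1$, which the lemma requires since it is stated for every $c\in(0,1)$. Your proposal does not establish it:
\begin{itemize}
\item Evaluating the bracket at the single point $c=1$ says nothing about the open interval.
\item The monotonicity or interval-bound route is only announced, not carried out.
\item The fallback of restricting to $c\le\frac14$ would prove a weaker statement than the one claimed.
\end{itemize}
The paper itself only asserts $\hat A(c)>0$ on $(0,1)$. So you have correctly located the one step that needs an actual argument, but you have not supplied it.

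The missing step is short and avoids the closed formula altogether. Since $A$ is even, write $\hat A(c)=2\int_0^{1/2}A(u)\cos(2\pi cu)\,du$. On $[0,\frac12]$, $A$ is positive and decreasing, because $\sqrt2\,u\le 1/\sqrt2<\pi/2$ there. For $\frac12<c<1$, the factor $\cos(2\pi cu)$ changes sign exactly once on $[0,\frac12]$, at $u_0=1/(4c)$. It is nonnegative on $[0,u_0]$ and nonpositive on $[u_0,\frac12]$, because $\pi c<\pi$. Hence $(A(u)-A(u_0))\cos(2\pi cu)\ge 0$ on the whole interval, and
\[
\hat A(c)\ \ge\ 2A(u_0)\int_0^{1/2}\cos(2\pi cu)\,du\ =\ A\!\left(\frac{1}{4c}\right)\frac{\sin(\pi c)}{\pi c}\ >\ 0 .
\]
Your direct argument already covers $c\le\frac12$. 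If you replace your discussion of the closed formula by this step, the rest of your proof establishes the lemma for every $c\in(0,1)$.
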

\begin{proof}
First, we have
\begin{equation}
\begin{aligned}
    \la K_{\delta}(c) - \hat{A}(c) \ra &\leq \hI \la \frac{\widetilde{\eta}_{\delta}^2(u)A(u)}{A_\delta}-A(u) \ra du\\
    &= \frac{1}{A_\delta}\hI \la A(u) \ra \la \widetilde{\eta}_{\delta}^2(u)-A_\delta \ra du\\
    &\leq \frac{1}{A_\delta}\hI \la A(u) \ra \lc \la 1- \widetilde{\eta}_{\delta}^2(u) \ra + \la1 -A_\delta \ra \rc du.
\end{aligned}
\end{equation}
Observe that 
\begin{equation}
    A>0, \quad0 < A_{\delta} <1, \quad \text{and} \quad \widetilde{\eta}_{\delta}^2 \leq 1.
\end{equation}
Then, we have
\begin{equation}
\begin{aligned}
    \la K_{\delta}(c) - \hat{A}(c) \ra & \leq \frac{1}{A_\delta}\hI A(u) \lc 1- \widetilde{\eta}_{\delta}^2(u) + 1 -A_\delta \rc du \\
    &= \frac{2\lc 1-A_{\delta} \rc}{A_\delta}.
\end{aligned}
\end{equation}
Then we obtain
\begin{equation}
    K_{\delta}(c) \geq \hat{A}(c) - \frac{2\lc 1-A_{\delta} \rc}{A_\delta}.
\end{equation}
Since $\frac{2\lc 1-A_{\delta} \rc}{A_\delta} \xrightarrow{} 0$ as $\delta \xrightarrow{}0$ and $\hat{A}(c) > 0$ for all $c \in (0,1)$, we can always pick $\delta \in \lc 0,\frac{1}{4} \rc$ that is sufficiently small such that 
\begin{equation}
    \hat{A}(c) - \frac{2\lc 1-A_{\delta} \rc}{A_\delta} >0
\end{equation}
when $c$ is fixed.
\end{proof}
We can now obtain a suitable lower bound for \(K_\delta(c)\):
\begin{lem}
For \(0 < \delta \leq \frac{1}{200}\), we have \(A_\delta \geq \frac{99}{100}\).
Moreover,
\[
K_\delta(c)^2 > \frac34
\qquad \forall \ 0 < c \leq 1/4.
\]
\end{lem}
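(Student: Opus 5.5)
The plan has two parts: a direct estimate for \(A_\delta\), and then the preceding lemma combined with the monotonicity in Lemma~\ref{lem:Kbound} to bound \(K_\delta(c)\) from below.

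\textbf{Step 1: \(A_\delta\geq \frac{99}{100}\).} Since \(0\le\widetilde\eta_\delta\le1\), \(\widetilde\eta_\delta=1\) on \(|x|\le\frac12-\delta\), and \(A>0\) with \(\int A=1\), we get
\[
A_\delta\;\ge\;\int_{|x|\le \frac12-\delta}A(x)\,dx\;=\;1-2\int_{\frac12-\delta}^{\frac12}A(x)\,dx .
\]
The crude bound \(A\le A(0)=1/(\sqrt2\sin(1/\sqrt2))\approx1.0886\) is slightly too weak: it only gives \(1-0.0109\) at \(\delta=\frac1{200}\). I would therefore use that \(\cos(\sqrt2x)\) is decreasing on \([0,\frac12]\). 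On \([\frac12-\delta,\frac12]\) this gives
\[
A(x)\le A\!\left(\tfrac12-\delta\right)\le A(0.495)=\frac{\cos(0.7)}{\sqrt2\sin(1/\sqrt2)}\approx 0.833 .
\]
Hence \(A_\delta\ge 1-2\delta\cdot0.833\ge 1-0.0084>\frac{99}{100}\) for \(\delta\le\frac1{200}\).

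\textbf{Step 2: reduce to \(c=\frac14\).} By Lemma~\ref{lem:Kbound}, \(K_\delta\) is strictly decreasing on \((0,1)\). So for \(0<c\le\frac14\) we have \(K_\delta(c)\ge K_\delta(\frac14)\). It then suffices to show \(K_\delta(\frac14)>\sqrt{3}/2\approx0.8660\); positivity matters, so that squaring preserves the inequality.

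\textbf{Step 3: the inequality at \(c=\frac14\).} The proof of the preceding lemma in fact establishes
\[
K_\delta(c)\ge\hat A(c)-\frac{2(1-A_\delta)}{A_\delta}
\]
for every \(\delta\in(0,\frac14)\). With Step~1 the error term is at most \(2\cdot\frac{1/100}{99/100}=\frac{2}{99}<0.0203\).

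For the main term I would evaluate the explicit formula for \(\hat A(\frac14)\), using \(\pi c\approx0.7854\) and \(1/\sqrt2\approx0.7071\):
\begin{itemize}
\item the first summand is \(\sin(1.4925)/2.9850\approx0.3340\);
\item the second is \(\sin(-0.0783)/(-0.1566)\approx0.4995\).
\end{itemize}
Multiplying their sum by \(1/(\sqrt2\sin(1/\sqrt2))\approx1.0886\) gives \(\hat A(\frac14)\approx0.907\). Therefore \(K_\delta(\frac14)\gtrsim0.886\), and \(K_\delta(\frac14)^2\gtrsim0.786>\frac34\).

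\textbf{Main obstacle.} The delicate step is making the numerics rigorous, especially the second summand of \(\hat A(\frac14)\). There \(\sqrt2-2\pi c\) is small (about \(-0.157\)), so errors in the numerator are amplified. I would handle it through the identity \(\sin(a-b)/(\sqrt2-2\pi c)=-\frac{1}{2}\,\sin(t)/t\), where \(t=\pi c-\frac{1}{\sqrt2}\) and \(\sqrt2-2\pi c=-2t\). Since \(\sin t/t\ge 1-t^2/6\), this summand is at least \(\frac12(1-t^2/6)\), which is stable. For the first summand I would use elementary Taylor bounds on \(\sin\) and \(\sin(1/\sqrt2)\). The margin (about \(0.02\) in \(K\)) is comfortable enough that three-digit interval estimates suffice.
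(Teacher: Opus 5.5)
Your proposal is correct and follows the paper's proof. Like the paper, you reduce to $c=\frac14$ via Lemma~\ref{lem:Kbound} and conclude from $K_\delta(\frac14)\ge \hat A(\frac14)-\frac{2}{99}>\frac{\sqrt3}{2}$. The one difference is in showing $A_\delta\ge\frac{99}{100}$: the paper integrates exactly to get $A_\delta\ge \sin\bigl((1-2\delta)/\sqrt2\bigr)/\sin(1/\sqrt2)$, then uses concavity of $\sin$ to get $\ge 1-2\delta$, whereas you bound the tail integral using monotonicity of $A$ and the value $A(0.495)\approx0.833$; both work.

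There is one harmless sign slip. With $t=\pi c-\frac{1}{\sqrt2}$, the second summand equals $+\frac12\,\frac{\sin t}{t}$, since numerator and denominator are both negative. That is the sign your lower bound $\frac12(1-t^2/6)$ actually uses.
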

\begin{proof}
Since \(\widetilde\eta_\delta(u) = 1\) for \(|u| \leq \frac{1}{2} - \delta\), the definition of \(A_\delta\) gives
\[
\begin{aligned}
A_\delta
&\geq \int_{-\frac12 + \delta} ^{\frac12 - \delta} A(u) du\\
&= \frac{\sin((1 - 2\delta)/\sqrt2)}{\sin(1/\sqrt2)}\\
&\geq 1 - 2\delta\\
&\geq \frac{99}{100}.
\end{aligned}
\]

By lemma~\ref{lem:Kbound} , \(K_\delta(c)\) is strictly decreasing, we have
\[
K_\delta(c) \geq K_\delta(\frac14) \geq \hat{A}(1/4) - 2\frac{1 - A_\delta}{A_\delta} \geq \hat{A}(\frac14) - \frac{2}{99} > \frac{\sqrt3}{2}
\]

\end{proof}

\section{Spectral estimates for pairs, triples and blocks}

Let
\begin{equation}
    \widetilde{R_1} = \lb x \in R_1 : \exists x' \in R_1 \text{ s.t } 0<\la x-x' \ra <c \rb
\end{equation}
be the set of simple critical zeros that are ``too close" and write its elements in increasing order as
\begin{equation}
    x^{(1)} < x^{(2)}< \dots < x^{\lc \la \widetilde{R_1} \ra \rc}.
\end{equation}

Now, start a new block whenever the distance between successive elements is at least \(c\).
Denote the resulting blocks by \(B_i\), so that
\begin{equation}
\widetilde {R_1} = \bigsqcup_i B_i 
\end{equation}
Every block contains at least two elements. Indeed, if \(x\in \widetilde{R_1}\), then any \(x' \in R_1\) with \(0<|x-x'|<c\) also belongs to \(\widetilde{R_1}\).
Such points belong to the same block, so a singleton block cannot occur.
Moreover, each \(B_i\) is a consecutive segment of the ordered set \(R_1\). If an element of \(R_1\) lay strictly between two successive elements of a block, it would be within distance \(c\) of these elements and would itself belong to \(\widetilde{R_1}\).
Thus successive elements of \(B_i\) are also successive in \(R_1\), and their distances are less than \(c\).
Fix \(0< c \leq 1/4\), and choose \(\delta > 0 \) sufficiently small that the kernel estimates above give
\[0< K_\delta(c) \le 1,
\qquad
K_\delta(c)^2>\frac34.\]
Recall that for simple real elements \(x,y \in R_1\),
\[\|v_{f_x}\| = 1, 
\qquad
v_{f_x}\cdot v_{f_{y} }= K_\delta(x - y)\]
In particular, if \(|x - y|< c\), then
\[v_{f_x}\cdot v_{f_{y}} \ge K_\delta(c)\]
We first establish estimates for two and three points.
\begin{lem}\label{lem:pair}
For distinct simple real elements \(x,y\), one has
\[tr \Psi (v_{f_x}v_{f_{x}}^T + v_{f_{y}}v_{f_{y}}^T)
=
2 + 2 K_\delta(x-y)^2\]
Consequently, if \(|x-y|<c\), then 
\[tr \Psi (v_{f_x}v_{f_{x}}^T + v_{f_{y}}v_{f_{y}}^T)
\geq
2 + 2 K_\delta(c)^2\]
\end{lem}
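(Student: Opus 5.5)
The plan is to compute the spectrum of the rank-two matrix
\[
M = v_{f_x}v_{f_x}^T + v_{f_y}v_{f_y}^T
\]
explicitly and then apply \(\Psi\) eigenvalue by eigenvalue. Write \(a=v_{f_x}\), \(b=v_{f_y}\) and \(t=a\cdot b=K_\delta(x-y)\). By \eqref{one1} we have \(\|a\|=\|b\|=1\), and \(t\) is real by Lemma~\ref{lem:Kbound}.

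\emph{Step 1: the spectrum of \(M\).} \(M\) is positive semidefinite with range contained in \(\mathrm{span}\{a,b\}\), so it has at most two nonzero eigenvalues and all other eigenvalues are \(0\). Since \(x\neq y\), the vectors \(a,b\) are linearly independent (Section~3), so the range is exactly two-dimensional. On this span we compute
\[
M(a+b)=(1+t)(a+b), \qquad M(a-b)=(1-t)(a-b).
\]
Hence the eigenvalues are \(1\pm t\). Equivalently, the nonzero eigenvalues of \(M\) coincide with those of the Gram matrix \(\begin{bmatrix}1&t\\ t&1\end{bmatrix}\).

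\emph{Step 2: applying \(\Psi\).} We have \(|t|\le \|a\|\|b\|=1\) by Cauchy--Schwarz, so both eigenvalues \(1\pm t\) lie in \([0,2]\). On \([0,2]\) the function \(\Psi\) is \(x\mapsto x^2\), and \(\Psi(0)=0\) handles the zero eigenvalues. Therefore
\[
\operatorname{tr}\Psi(M)=(1+t)^2+(1-t)^2 = 2+2t^2 = 2+2K_\delta(x-y)^2 .
\]

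\emph{Step 3: the consequence.} If \(|x-y|<c\), then \eqref{K_d(c)} gives \(K_\delta(x-y)\ge K_\delta(c)>0\), so \(K_\delta(x-y)^2\ge K_\delta(c)^2\). The inequality follows.

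The only point needing care is checking that the eigenvalues stay in \([0,2]\), so that \(\Psi\) acts as squaring; this is exactly the bound \(|t|\le1\). Everything else is a direct computation.
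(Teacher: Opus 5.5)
Your proposal is correct and follows essentially the same route as the paper. Both identify the nonzero eigenvalues of $v_{f_x}v_{f_x}^T+v_{f_y}v_{f_y}^T$ as $1\pm K_\delta(x-y)$, note that they lie in $[0,2]$ where $\Psi(t)=t^2$, and sum the squares; you verify the eigenvalues directly through the eigenvectors $a\pm b$, whereas the paper cites the Gram-matrix nonzero-spectrum identity, and the positivity $K_\delta(c)>0$ you use to square the inequality comes from the standing choice of $\delta$ in Section~6.
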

\begin{proof}
By the standard nonzero-spectrum identity for Gram matrices~\cite{Bhatia1997},
the nonzero eigenvalues of the matrix in question coincide with those of the Gram matrix
\[
\begin{pmatrix}
1 & K_\delta(x -y)\\
K_\delta(x-y) & 1
\end{pmatrix}.
\]
They are \(1+ |K_\delta(x-y)|\) and \(1 - |K_\delta(x-y)|\). 
Both belong to \([0,2]\), where \(\Psi(t) = t^2\).
Since \(\Psi(0) = 0\), the trace is therefore 
\[
(1+ |K_\delta(x-y)|)^2 + (1 - |K_\delta(x-y)|)^2 = 2 + 2K_\delta(x-y)^2
\]
The inequality follows from the lower bound of the kernel.
\end{proof}

The next estimate will be used for blocks of odd size.
\begin{lem}\label{lem:triple}
Let \(x<y<z\) belong to \(R_1\), with
\[
y-x<c,
\qquad
z-y<c
\]
Then
\[
tr\Psi(v_{f_x}v_{f_{x}}^T + v_{f_{y}}v_{f_{y}}^T + v_{f_{z}}v_{f_{z}}^T) \geq 3 + 3 K_\delta(c)^2 
\]
\end{lem}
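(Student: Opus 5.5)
The plan is to reduce to the $3\times 3$ Gram matrix, as in Lemma~\ref{lem:pair}. By the nonzero-spectrum identity for Gram matrices, the nonzero eigenvalues of $v_{f_x}v_{f_x}^T+v_{f_y}v_{f_y}^T+v_{f_z}v_{f_z}^T$ coincide with those of
\[
M=\begin{pmatrix}1&a&d\\ a&1&b\\ d&b&1\end{pmatrix},\qquad a=K_\delta(y-x),\ b=K_\delta(z-y),\ d=K_\delta(z-x).
\]
Since $\Psi(0)=0$, it suffices to bound $\sum_j\Psi(\lambda_j)$ over the eigenvalues $\lambda_1\ge\lambda_2\ge\lambda_3\ge 0$ of $M$. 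These satisfy $\sum_j\lambda_j=3$, so $\lambda_1\le 3$.

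The hypotheses give $y-x<c$ and $z-y<c$, so by \eqref{K_d(c)} we have $a,b\ge K_\delta(c)$. The third entry $d$ is not controlled, and we will only use $d^2\ge 0$. Set $S=a^2+b^2+d^2$ and $k=K_\delta(c)^2$. Then
\[
2k\le S\le 3,\qquad \operatorname{tr}(M^2)=3+2S .
\]

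The key identity is $\Psi(t)=t^2-\bigl((t-2)_+\bigr)^2$ for $t\ge 0$. Indeed, for $t>2$ we have $t^2-(t-2)^2=4t-4$. Because at most one eigenvalue can exceed $2$, this gives
\[
\operatorname{tr}\Psi(M)=3+2S-\bigl((\lambda_1-2)_+\bigr)^2 .
\]

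If $\lambda_1\le 2$, we are done immediately, since $3+2S\ge 3+4k\ge 3+3k$.

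The main obstacle is the case $\lambda_1>2$, where we need an upper bound on $\lambda_1$ in terms of $S$. Write $\mu_j=\lambda_j-1$ for the eigenvalues of $M-I$. Then $\sum_j\mu_j=0$ and $\sum_j\mu_j^2=\|M-I\|_F^2=2S$. By Cauchy--Schwarz, $\mu_1^2=(\mu_2+\mu_3)^2\le 2(\mu_2^2+\mu_3^2)$. Hence $\tfrac32\mu_1^2\le 2S$, that is, $\mu_1\le\sqrt{4S/3}$. Substituting $\lambda_1-2=\mu_1-1\le\sqrt{4S/3}-1$ yields
\[
\operatorname{tr}\Psi(M)\ \ge\ 3+2S-\Bigl(\sqrt{4S/3}-1\Bigr)^2\ =\ 2+\tfrac23 S+\tfrac{4}{\sqrt3}\sqrt S .
\]
This right-hand side is increasing in $S$. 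Using $S\ge 2k$, it therefore suffices to check
\[
2+\tfrac43 k+\tfrac{4\sqrt2}{\sqrt3}\sqrt k\ \ge\ 3+3k,\qquad\text{i.e.}\qquad \tfrac{4\sqrt2}{\sqrt3}\sqrt k\ \ge\ 1+\tfrac53 k .
\]

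It remains to verify this last inequality for $k\in(3/4,1]$. The left side is concave in $k$ and the right side is linear, so the difference is concave and it is enough to check the two endpoints. At $k=1$ the left side is about $3.27$ and the right side is about $2.67$. At $k=3/4$ the left side is about $2.83$ and the right side is $2.25$. The range $k\in(3/4,1]$ is exactly what the lower bound $K_\delta(c)^2>\tfrac34$ from the previous section and the bound $K_\delta(c)\le 1$ provide. This completes the argument.
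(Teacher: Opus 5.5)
Your proof is correct and follows the paper's argument. You reduce to the $3\times 3$ Gram matrix, split on whether the top eigenvalue exceeds $2$, write $\operatorname{tr}\Psi(M)=\operatorname{tr}(M^2)-(\lambda_1-2)^2$, and bound $\lambda_1$ through $\|M-I\|_F$; the only difference is that you use the sharper bound $\lambda_1-1\le\sqrt{2/3}\,\|M-I\|_F$ coming from $\operatorname{tr}(M-I)=0$, whereas the paper uses the cruder $\lambda_1-1\le\|M-I\|_F$ and closes with $2\|M-I\|_F-1\ge 4K_\delta(c)-1\ge 3K_\delta(c)^2$ for $K_\delta(c)\in[\tfrac13,1]$. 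Either version suffices.
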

\begin{proof}
Let \(H\) be the Gram matrix of the three vectors.
The trace in the statement equals
\(tr\Psi(H)\).
Since \(H\) is positive semi-definite with diagonal entries equal to \(1\),
\[
trH = 3,
\qquad
\|H-I_3\|_F^2 = tr(H-I_3)^2 = tr(H^2) - 2tr(H) + tr(I_3) = tr(H^2) - 3.
\]
Moreover,since 
\[|x-y|,|y-z|\leq c \implies K_\delta(x-y)^2, K_\delta(y-z)^2 \ge K_\delta(c)^2\]
and
\[
K_\delta(x-z)^2 \geq 0.
\]
We have:
\[
\begin{aligned}
\|H-I_3\|_F^2
&=
2K_\delta(x-y)^2
+2K_\delta(y-z)^2
+2K_\delta(z-x)^2\\
&\geq 4K_\delta(c)^2.
\end{aligned}
\]

If all eigenvalues of \(H\) are at most \(2\), then
\[
tr\Psi(H)
=
tr(H^2)
\geq 3+4K_\delta(c)^2
\]
which implies the desired estimate.
Otherwise, let \(\mu>2\) be the largest eigenvalue, \(\lambda_1,\lambda_2\) be the other two eigenvalues. \(\lambda_1, \lambda_2\) are nonnegative and have sum \(3-\mu<1\), so they are less than \(2\).
Consequently,
\[
tr\Psi(H)
=
tr(H^2) - \mu^2 + 4\mu - 4 = \|H - I_3\|_F^2 +3 - (\mu-2)^2
\]
Since
\[
(\mu - 1)^2 \leq (\mu-1)^2 + (\lambda_1-1)^2 + (\lambda_2-1)^2 \leq \|H-I_3\|^2_F
\]
It follows that
\[
\begin{aligned}
tr\Psi(H) - 3
&\geq
\|H-I_3\|_F^2 - (\|H-I_3\|_F -1)^2\\
&=
2\|H-I_3\|_F - 1\\
&\geq
4K_\delta(c)-1\\
&\geq
3K_\delta(c)^2.
\end{aligned}
\]
The last inequality follows from
\(\frac13 \leq K_\delta(c) \leq 1\).
\end{proof}
Thus, each pair or triple contributes at least \(1+K_\delta(c)^2\) per point on average.
Since $\Psi$ is nondecreasing and convex on $[0,\infty)$ with
$\Psi(0)=0$, the convex trace superadditivity inequality gives
\[
\operatorname{tr}\Psi(A+B)
\geq \operatorname{tr}\Psi(A)+\operatorname{tr}\Psi(B)
\qquad (A,B\succeq0);
\]
see \cite[Corollary~3.2]{BourinLee2012}.
Applying this inequality to the positive semidefinite matrices
associated with the pairs and triples in each block gives the following lemma.

\begin{lem}\label{lem:block}
For every block \(B_i\),
\[
tr\Psi(\sum_{x\in B_i}v_{f_{x}}v_{f_{x}}^T)
\geq
(1 + K_\delta(c)^2)|B_i|.
\]
Consequently,
\[
tr\Psi(P)
\ge
n+K_\delta(c)^2|\widetilde{R_1}|,
\]
and
\begin{equation}
tr(Q) + K_\delta(c)^2|\widetilde{R_1}|
\leq
\|G\|_F^2 - N(T).
\end{equation}
\end{lem}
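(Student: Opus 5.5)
Each block \(B_i\) is a consecutive run \(y_1<y_2<\dots<y_m\) of \(R_1\) with \(m\ge 2\) and successive gaps \(y_{j+1}-y_j<c\). I will partition it into consecutive pairs, plus one consecutive triple when \(m\) is odd. If \(m\) is even, the pieces are \(\{y_1,y_2\},\{y_3,y_4\},\dots\). If \(m\) is odd (so \(m\ge 3\)), the pieces are \(\{y_1,y_2,y_3\},\{y_4,y_5\},\dots\).

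Every pair in this partition consists of successive points at distance \(<c\), so Lemma~\ref{lem:pair} gives a contribution of at least \(2+2K_\delta(c)^2\). The triple satisfies the hypotheses of Lemma~\ref{lem:triple} and contributes at least \(3+3K_\delta(c)^2\). Each piece \(S\) gives a positive semidefinite matrix \(M_S=\sum_{x\in S}v_{f_x}v_{f_x}^T\), and these matrices sum to the block matrix. Repeated use of the superadditivity inequality \(\operatorname{tr}\Psi(A+B)\ge\operatorname{tr}\Psi(A)+\operatorname{tr}\Psi(B)\) for \(A,B\succeq0\) then yields the block bound \((1+K_\delta(c)^2)|B_i|\).

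For \(\operatorname{tr}\Psi(P)\), write \(P\) as the sum of the block matrices and the rank-one terms \(v_{f_x}v_{f_x}^T\) for \(x\in R_1\setminus\widetilde{R_1}\). Each rank-one term has the single nonzero eigenvalue \(\|v_{f_x}\|^2=1\), so \(\operatorname{tr}\Psi(v_{f_x}v_{f_x}^T)=\Psi(1)=1\), using \(\Psi(0)=0\). Applying superadditivity across all these positive semidefinite summands gives
\[
\operatorname{tr}\Psi(P)\ge |R_1\setminus\widetilde{R_1}|+(1+K_\delta(c)^2)|\widetilde{R_1}| = n+K_\delta(c)^2|\widetilde{R_1}|.
\]

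For the last inequality, Lemma~\ref{PsiBound} gives \(\operatorname{tr}\Psi(P)+2\operatorname{tr}(Q)\le\|G\|_F^2\). Substituting the bound just proved gives
\[
n+K_\delta(c)^2|\widetilde{R_1}|+2\operatorname{tr}(Q)\le\|G\|_F^2 .
\]
Since \(\operatorname{tr}(Q)=N(T)-n\), we have \(n+2\operatorname{tr}(Q)=N(T)+\operatorname{tr}(Q)\). Rearranging yields \(\operatorname{tr}(Q)+K_\delta(c)^2|\widetilde{R_1}|\le\|G\|_F^2-N(T)\).

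The only delicate point is the combinatorial partition. The pieces must consist of successive block elements so that the distance hypotheses of the pair and triple lemmas hold. The block structure established above guarantees this, because successive elements of \(B_i\) lie at distance \(<c\) and no block is a singleton. Beyond that, I only need that superadditivity extends by induction to finitely many positive semidefinite summands.
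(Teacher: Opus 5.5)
Your proposal is correct and follows the paper's proof essentially step for step. It uses the same partition of each block into successive pairs, with an initial triple when $|B_i|$ is odd, and combines Lemmas~\ref{lem:pair} and~\ref{lem:triple} through trace superadditivity; the rank-one terms for $x\in R_1\setminus\widetilde{R_1}$ each contribute $\Psi(1)=1$, and the final step substitutes into Lemma~\ref{PsiBound} using $n=N(T)-\operatorname{tr}(Q)$.
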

\begin{proof}
If \(|B_i|\) is even, partition its elements into successive pairs. If \(|B_i|\) is odd, take its first three elements as a triple and partition the remaining elements into successive pairs. Since \(|B_i| \geq 2\), this covers every element exactly once. Lemmas~\ref{lem:pair} and ~\ref{lem:triple}, together with the preceding trace superadditivity inequality, give
\[
tr\Psi(\sum_{x\in B_i}v_{f_{x}}v_{f_{x}}^T)
\geq
\sum_{x\in B_i} (1+K_\delta(c)^2)
=
(1+K_\delta(c)^2)|B_i|
\]
Then use the same trace superadditivity inequality for \(tr\Psi(P)\):

\begin{align}
tr\Psi(P)
&\geq
\sum_{x\in R_1-\widetilde{R_1}} tr\Psi(v_{f_{x}}v_{f_x}^T) + \sum_{B_i\ \text{blocks}} tr\Psi(\sum_{x\in B_i} v_{f_x}v^T_{f_x})\\
&\geq
|R_1 - \widetilde{R_1}| + |\widetilde{R_1}|(1+K_\delta(c)^2)\\
&=
n + K_\delta(c)^2|\widetilde{R_1}|
\end{align}
Finally, lemma ~\ref{PsiBound} gives
\[
tr\Psi(P)
+
2tr(Q)
\leq
\|G\|_F^2
\]
Substitute the preceding estimate and use \(n = N(T) - trQ\) to obtain:
\begin{equation}
tr(Q) + K_\delta(c)^2|\widetilde{R_1}|
\leq
\|G\|_F^2 - N(T).
\end{equation}
\end{proof}

\section{Clusters and Consecutive gaps}
Let $z \in Z_T$, we define multi-set
\begin{equation}
    \mathcal{C}_z = \lb s \in Z_T : \mathrm{Re}(s) = \mathrm{Re}(z) \rb.
\end{equation}
Set
\begin{equation}
    \text{Simple case: }Z_T^s= \lb \mathcal{C}_z : |\mathcal{C}_z|=1 \rb,
\end{equation}
\begin{equation}
        \text{Mixed case: }Z_T^m= \lb \mathcal{C}_z : \exists x \in \mathcal{C}_z \text{ s.t } x \in R_1 \text{ and } |\mathcal{C}_z|>1 \rb,
\end{equation}
\begin{equation}
    \text{No simple case: }Z_T^n= \lb \mathcal{C}_z : \nexists x \in \mathcal{C}_z \text{ s.t } x \in R_1 \rb,
\end{equation}
Each of these three collections consists of distinct clusters.
Denote $\la Z_T^s \ra$, $\la Z_T^m \ra$, and $\la Z_T^n \ra$ as the number of $\mathcal{C}_z$ in each set. One can check that
\[
Z_T
=
\bigsqcup_{\mathcal C\in Z_T^s\cup Z_T^m\cup Z_T^n}
\mathcal C.
\]

Since
\[
tr(Q) = N(T) - n,
\qquad
\forall z\in R_2 \cup R_3, |\mathcal{C}_z| \geq 2 
\]
Then
\[
|Z_T^m| + |Z_T^n| \leq \frac{tr(Q)}{2} . 
\]
\begin{prop}
For every integer \(\ell \geq 2\),
\[
w_\ell(T,c) \geq N(T) - \frac{\ell + 2}{2} tr(Q) - \frac{\ell}{2}  |\widetilde{R_1}|- \ell
\]
\end{prop}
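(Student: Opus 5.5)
We will argue by counting \emph{bad} indices. Say \(j\in\{1,\dots,N(T)-\ell\}\) is bad if some gap \(\gamma_{j+a+1}-\gamma_{j+a}\), \(0\le a<\ell\), is smaller than \(2\pi c/\log T\). Every short gap \(\gamma_{i+1}-\gamma_i\) makes at most \(\ell\) indices bad, namely \(j=i-a\) for \(0\le a<\ell\). Hence
\[
w_\ell(T,c)\ge N(T)-\ell-\ell\cdot\#\{\text{short gaps}\}.
\]
This is too crude, so we group the short gaps into maximal runs. A maximal run of \(s\) consecutive short gaps occupies \(s+1\) consecutive ordinates and destroys at most \(s+\ell-1\) starting indices. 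We therefore bound the number of destroyed indices by \(\sum_{\text{runs}}(s+\ell-1)\), plus \(\ell\) for the boundary indices near \(N(T)\).

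Next we translate ordinates into the normalized multiset \(Z_T\). A gap \(\gamma_{i+1}-\gamma_i<2\pi c/\log T\) means the real parts of the corresponding points of \(Z_T\) differ by less than \(c\). So each short gap joins two ordinates that either lie in the same cluster \(\mathcal C_z\) (gap \(0\)) or come from distinct clusters at distance \(<c\). We assign every ordinate in a maximal run to a cluster. Each run then consists of consecutive clusters whose successive real parts are \(<c\) apart.

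Clusters in \(Z_T^m\cup Z_T^n\) have size at least \(2\) and number at most \(\operatorname{tr}(Q)/2\). A simple cluster lies in \(R_1\), and if it belongs to a run it must lie in \(\widetilde{R_1}\), unless its neighbours in the run are non-simple clusters. For a run with ordinate-set \(S\) of size \(s+1\), the aim is a charging inequality of the form
\[
s+\ell-1\le \tfrac{\ell}{2}\,\#(S\cap\widetilde{R_1})+\tfrac{\ell+2}{2}\,\#(S\setminus R_1)\ \ (\text{up to using }|Z_T^m|+|Z_T^n|\le \operatorname{tr}(Q)/2).
\]
We check this in the following cases.
\begin{itemize}
\item \textbf{Runs entirely inside \(\widetilde{R_1}\).} Then \(s+1=\#S\ge2\), and \(s+\ell-1\le \frac{\ell}{2}(s+1)\) holds whenever \(s\ge1\) and \(\ell\ge2\). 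This is exactly where the weight \(\ell/2\) per point of \(\widetilde{R_1}\) comes from.
\item \textbf{Runs containing a non-simple point.} A point of \(R_2\cup R_3\) contributes to \(\operatorname{tr}(Q)\) with its multiplicity. Each such cluster contributes at least \(2\) to \(\operatorname{tr}(Q)\), giving weight at least \(\ell+2\) per non-simple cluster. That weight must absorb the overhead \(\ell-1\) of the run, together with any simple points adjacent to it that are not in \(\widetilde{R_1}\). Here a simple point is counted in \(\widetilde{R_1}\) only if it is within \(c\) of another simple point.
\end{itemize}
Off-line zeros \(z,\bar z\) share an ordinate and are treated as one cluster of size \(\ge2\).

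Summing the charging inequality over runs, and using that runs are disjoint, gives
\[
w_\ell\ge N(T)-\tfrac{\ell+2}{2}\operatorname{tr}(Q)-\tfrac{\ell}{2}|\widetilde{R_1}|-\ell.
\]

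The main obstacle is the mixed bookkeeping in the second case. In a run, simple points squeezed between non-simple clusters are not in \(\widetilde{R_1}\), yet they lengthen the run. We must show that each non-simple cluster of multiplicity \(m\ge2\) has enough budget \(\frac{\ell+2}{2}m\) to pay for its own \(m\) ordinates, for one or two adjacent unpaired simple ordinates on each side, and for the run overhead. We would first verify this for \(m=2\) with a simple neighbour on each side. That is the tightest configuration: roughly \(m+2+\ell-1\le \ell+2\) fails, so the slack must come from sharing the overhead among the clusters of the run. Failing that, we would strengthen the weight to \(\ell+2\) per cluster by using \(|Z_T^m|+|Z_T^n|\le\operatorname{tr}(Q)/2\) rather than the multiplicities.
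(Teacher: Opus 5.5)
Your run-based framework is viable, and the per-run charging inequality you aim for is in fact true. The proposal does not establish it, though. The case of runs containing non-simple clusters is left open, and your test of the ``tightest configuration'' rests on an off-by-one error. A run made of a simple point, a double zero and another simple point has \(s+1=4\) ordinates but only \(s=3\) short gaps. It therefore destroys \(s+\ell-1=\ell+2\) indices, which is exactly the budget \(\frac{\ell+2}{2}\cdot 2\), so nothing fails. Your remedies would not have rescued a genuine failure either. With only one cluster in the run there is no overhead to share. Charging \(\ell+2\) per cluster via \(|Z_T^m|+|Z_T^n|\le\operatorname{tr}(Q)/2\) gives no more budget than \(\frac{\ell+2}{2}m\) for a cluster of multiplicity \(m\ge 2\).

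The missing ingredient is a structural fact about runs. Write \(s+1=p+q+u\), where \(p=\#(S\cap\widetilde{R_1})\), \(q=\#(S\setminus R_1)\), and \(u\) is the number of points of \(S\) in \(R_1\setminus\widetilde{R_1}\). Your inequality then reads \(u+\ell-2\le(\frac{\ell}{2}-1)p+\frac{\ell}{2}q\). A run contains whole clusters, and successive distinct ordinates in it are less than \(c\) apart. So a point counted by \(u\) can be adjacent, among the distinct ordinates of the run, only to clusters of \(Z_T^n\): an adjacent cluster containing an element of \(R_1\) would put both points in \(\widetilde{R_1}\). In particular, a non-simple cluster has at most one unpaired simple neighbour on each side, not two.

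The remaining cases are then short:
\begin{itemize}
\item If the run contains \(k\ge 1\) clusters of \(Z_T^n\), then \(u\le k+1\) and \(q\ge 2k\). It suffices that \(k+\ell-1\le \ell k\), i.e.\ \((\ell-1)(k-1)\ge 0\).
\item If \(k=0\) and \(u=0\), use \(q\ge 2\), or \(p\ge 2\) when \(q=0\).
\item If \(k=0\) and \(u=1\), the run is a single mixed cluster with \(q\ge 2\).
\end{itemize}

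With this added, your count is complete and genuinely different from the paper's. The paper starts from the \(n-\ell\) strings of \(\ell+1\) consecutive elements of \(R_1\), paying \(\operatorname{tr}(Q)\) up front. It discards at most \(\ell\) strings per cluster of \(Z_T^m\cup Z_T^n\) and at most \(|B_i|+\ell-2\le\frac{\ell}{2}|B_i|\) per block. It therefore never has to account for simple points adjacent to non-simple clusters.
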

\begin{proof}
Order the clusters by ordinate and list the elements of \(R_1\) in the corresponding order.

We first begin with a string of \(\ell +1\) consecutive elements  of \(R_1\). There are at least \(n - \ell \) such strings.
Discard a string if one of its interior elements belongs to a cluster in \(Z_T^m\), or if a cluster in \(Z_T^n\) lies between its first and last ordinates. Each cluster in \(Z_T^m\) invalidates at most \(\ell - 1\) strings, Each cluster in \(Z_T^n\) lies between at most one pair of consecutive elements of \(R_1\), and hence invalidates at most \(\ell\) strings.
This leaves at least\[
n - \ell - (\ell - 1)|Z_T^m| - (\ell)|Z_T^n|
\]
candidates.

We now discard candidates containing a short gap. Since if there is a short gap between point \(x,y \in R_1\), then there must exist a block \(B_i\) such that \(x,y \in B_i\). Then each block \(B_i\) can invalidate at most \(|B_i| + \ell -2\) candidates. 
Since \(|B_i|,\ell \geq 2\),
\[
|B_i| + \ell - 2 \leq \frac{\ell }{2} |B_i|.
\]
The assumption \(\ell\ge2\) is used at this step, since the above inequality does not hold in general for \(\ell = 1\). We have also obtained quantitative bounds for the single-gap case by a separate argument; these results are beyond the scope of the present paper.

Summing over the blocks, the total loss is at most
\[
\frac{\ell}{2} |\widetilde{R_1}|
\]

Consequently,
\[
\begin{aligned}
w_\ell(T,c)
&\geq n - \ell - (\ell - 1)|Z_T^m| - (\ell)|Z_T^n| - \frac{\ell}{2} |\widetilde{R_1}|\\
&\geq n - \ell - \ell (|Z_T^m| + |Z_T^n|) - \frac{\ell}{2} |\widetilde{R_1}|\\
&\geq n - \frac{\ell}{2} tr(Q) - \frac{\ell}{2} |\widetilde{R_1}| - \ell\\
&\geq N(T) - \frac{\ell+2}{2} tr(Q) - \frac{\ell}{2} |\widetilde{R_1}| - \ell
\end{aligned}
\]

\end{proof}

\section{Proof of Theorem 1}

Fix \(0 < c \leq \frac{1}{4}\).
The spectral estimate of Section~6 gives
\[
tr(Q) + K_\delta(c)^2|\widetilde{R_1}|
\leq \|G\|_F^2 - N(T).
\]
Now assume \(2 \leq \ell \leq 4\).
By the choice of \(\delta\) in Section~5,
\[
\frac{\ell}{\ell+2} 
\leq \frac{3}{4}
< K_\delta(c)^2
\]
It follows that
\[
\begin{aligned}
\frac{\ell + 2}{2} tr(Q) + \frac{\ell}{2} |\widetilde{R_1}|
&\leq \frac{\ell+2}{2} (tr(Q) + K_\delta(c)^2|\widetilde{R_1}|)\\
&\leq \frac{\ell + 2}{2} (\|G\|_F^2 - N(T)).
\end{aligned}
\]
Combining this with the preceding proposition gives
\[
w_\ell(T,c) 
\geq N(T) - \frac{\ell + 2}{2} (\|G\|_F^2 - N(T)) - \ell.
\]
Keeping \(\delta\) fixed, Lemma~\ref{Youness} yields
\[
\|G\|_F^2
=
\lc C_{\eta_{\delta}}+O_{\delta}\lc \frac{1}{\sqrt{\log T}}\rc \rc N(T)
\]
Hence
\[
\frac{w_{\ell}(T,c)}{N(T)}
\geq
1 - \frac{\ell + 2}{2} \lc C_{\eta_{\delta}} - 1 +O_{\delta}\lc \frac{1}{\sqrt{\log T}}\rc \rc  - \frac{\ell}{N(T)}
\]
First, let \(T \to \infty\), with \(\delta\) fixed. Then let \(\delta \to 0\), lemma~\ref{Youness} shows that \(C_{\eta_{\delta}} \to C_{\mathrm{MT}}\).
We obtain, for every fixed \(0 < c \leq \frac{1}{4}\) and \(2\leq \ell \leq 4\),
\[
\boxed{\liminf_{T \to \infty} \frac{w_\ell(T,c)}{N(T)}}
\geq 1 - \frac{\ell+2}{2}(C_{\mathrm{MT}} -1)
= \frac{\ell + 4 - (\ell + 2)C_{\mathrm{MT}}}{2}.
\]
In particular, for each fixed $0<c\leq 1/4$, we obtain
\[
\liminf_{T\to\infty}\frac{w_\ell(T,c)}{N(T)}
\geq
\begin{cases}
0.3450014, & \ell=2,\\
0.1812517, & \ell=3,\\
0.0175021, & \ell=4.
\end{cases}
\]

\section*{AI assistance}
ChatGPT (OpenAI; GPT-5.6 Sol and GPT-6 Astra) was used to explore
and develop parts of the mathematical argument, assist with
intermediate calculations, and help revise the manuscript.
The authors take responsibility for the mathematical content
and the final manuscript.

\section*{Acknowledgments}

We would like to thank Alexandra Florea and Nathan Kaplan, the
organizers of ANTeater 2026 at UC Irvine, for creating a stimulating
environment for the exchange of ideas and for bringing together a
broad community interested in analytic number theory. We are grateful
to Anurag Sahay for suggesting this problem during ANTeater 2026.

We also thank Tianyu Zhao, Somin Kim, and Kai Hubbard for the helpful discussions of this problem in our working group at ANTeater 2026. We are particularly grateful to Tianyu Zhao for pointing out that Montgomery's pair correlation can be used to obtain positive-proportion results for consecutive moderate gaps under the Riemann hypothesis, without assuming the pair correlation conjecture, and for suggesting that the argument of Alpöge and Furman might lead to unconditional results.

We are deeply grateful to Erick Ross for his contributions to this project, for his passion for mathematics, and for his remarkable
ability to organize ideas and see the deeper structure behind them.
Erick constantly encouraged us not to stop at the first interesting
result, but to look further for ideas and questions with broader and
more lasting reach. We will remember just as strongly his kindness,
generosity, and extraordinary willingness to work with others.

\bibliographystyle{plainnat}
\bibliography{gap_2_to_4_references}

@misc{GS24,
  author        = {Gonek, Steven M. and Sahay, Anurag},
  title         = {Consecutive moderate gaps between zeros of the {Riemann} zeta function},
  year          = {2024},
  howpublished  = {arXiv preprint arXiv:2412.15481},
  eprint        = {2412.15481},
  archivePrefix = {arXiv},
  primaryClass  = {math.NT},
  doi           = {10.48550/arXiv.2412.15481},
  url           = {https://arxiv.org/abs/2412.15481}
}

@misc{AF26,
  author        = {Alp{\"o}ge, Levent and Furman, Ralph},
  title         = {More than two thirds of the zeta zeros are simple and on the critical line},
  year          = {2026},
  howpublished  = {arXiv preprint arXiv:2608.13637},
  eprint        = {2608.13637},
  archivePrefix = {arXiv},
  primaryClass  = {math.NT},
  doi           = {10.48550/arXiv.2608.13637},
  url           = {https://arxiv.org/abs/2608.13637v2},
  note          = {Version 2}
}

@misc{Lam26,
  author        = {Lamzouri, Youness},
  title         = {A new proof that more than {$2/3$} of the zeros of the {Riemann} zeta function are simple and on the critical line},
  year          = {2026},
  howpublished  = {arXiv preprint arXiv:2609.02882},
  eprint        = {2609.02882},
  archivePrefix = {arXiv},
  primaryClass  = {math.NT},
  doi           = {10.48550/arXiv.2609.02882},
  url           = {https://arxiv.org/abs/2609.02882v2},
  note          = {Version 2}
}

@article{BGSTB24,
  author        = {Baluyot, Siegfred Alan C. and Goldston, Daniel Alan
                   and Suriajaya, Ade Irma and Turnage-Butterbaugh, Caroline L.},
  title         = {An unconditional {Montgomery} theorem for pair correlation of zeros of the {Riemann} zeta-function},
  journal       = {Acta Arithmetica},
  volume        = {214},
  year          = {2024},
  pages         = {357--376},
  doi           = {10.4064/aa230612-20-3},
  eprint        = {2306.04799},
  archivePrefix = {arXiv},
  primaryClass  = {math.NT},
  url           = {https://doi.org/10.4064/aa230612-20-3}
}

@article{BourinLee2012,
  author        = {Bourin, Jean-Christophe and Lee, Eun-Young},
  title         = {Unitary orbits of {Hermitian} operators with convex or concave functions},
  journal       = {Bulletin of the London Mathematical Society},
  volume        = {44},
  number        = {6},
  pages         = {1085--1102},
  year          = {2012},
  doi           = {10.1112/blms/bds080},
  eprint        = {1109.2384},
  archivePrefix = {arXiv},
  primaryClass  = {math.FA},
  url           = {https://doi.org/10.1112/blms/bds080}
}

@book{Bhatia1997,
  author    = {Bhatia, Rajendra},
  title     = {Matrix Analysis},
  series    = {Graduate Texts in Mathematics},
  volume    = {169},
  publisher = {Springer},
  address   = {New York},
  year      = {1997},
  doi       = {10.1007/978-1-4612-0653-8},
  isbn      = {978-0-387-94846-1},
  url       = {https://doi.org/10.1007/978-1-4612-0653-8}
}

@book{Titchmarsh1986,
  author    = {Titchmarsh, E. C.},
  title     = {The Theory of the {Riemann} Zeta-Function},
  edition   = {2},
  publisher = {The Clarendon Press, Oxford University Press},
  address   = {Oxford},
  year      = {1986},
  isbn      = {0-19-853369-1},
  note      = {Revised by D. R. Heath-Brown}
}

@incollection{Mon75,
  author    = {Montgomery, Hugh L.},
  title     = {Distribution of the zeros of the {Riemann} zeta function},
  booktitle = {Proceedings of the International Congress of Mathematicians
               (Vancouver, 1974)},
  volume    = {1},
  publisher = {Canadian Mathematical Congress},
  address   = {Montreal},
  year      = {1975},
  pages     = {379--381}
}

@article{CCLM17,
  author        = {Carneiro, Emanuel and Chandee, Vorrapan
                   and Littmann, Friedrich and Milinovich, Micah B.},
  title         = {Hilbert spaces and the pair correlation of zeros of the {Riemann} zeta-function},
  journal       = {Journal f{\"u}r die reine und angewandte Mathematik},
  volume        = {725},
  year          = {2017},
  pages         = {143--182},
  doi           = {10.1515/crelle-2014-0078},
  eprint        = {1406.5462},
  archivePrefix = {arXiv},
  primaryClass  = {math.NT},
  url           = {https://doi.org/10.1515/crelle-2014-0078}
}

@misc{Lam26Isolated,
  author        = {Lamzouri, Youness},
  title         = {Isolated zeros of the {Riemann} zeta function and
                   zeros of linear combinations of {Dirichlet}
                   {$L$}-functions},
  year          = {2026},
  eprint        = {2610.26841},
  archivePrefix = {arXiv},
  primaryClass  = {math.NT},
  doi           = {10.48550/arXiv.2610.26841},
  url           = {https://arxiv.org/abs/2610.26841}
}

\end{document}